\newcommand{\xdownarrow}[1]{%
    {\left\downarrow\vbox to #1{}\right.\kern-\nulldelimiterspace}}
\newcommand{\Z}{\mathbb{Z}}
\newcommand{\bq }{\begin{equation}}
\newcommand{\eq }{\end{equation}}
\theoremstyle{plain}
\newtheorem{thm}{Theorem}[section]
\newtheorem{lem}[thm]{Lemma}
\newtheorem{prop}[thm]{Proposition}
\newtheorem{cor}[thm]{Corollary}
\newtheorem{rem}[thm]{Remark}
\theoremstyle{definition}
\newtheorem{defn}[thm]{Definition}
\DeclareMathOperator{\modu}{{{mod}}}
\theoremstyle{example}
\title{The Picard group of Brauer-Severi varieties}
\author[E. Badr] {Eslam Badr}
\address{$\bullet$\,\,Eslam Essam Ebrahim Farag Badr}
\address{Departament Matem\`atiques, Edif. C, Universitat Aut\`onoma de Barcelona\\
08193 Bellaterra, Catalonia, Spain} \email{eslam@mat.uab.cat}
\address{Department of Mathematics,
Faculty of Science, Cairo University, Giza-Egypt}
\email{eslam@sci.cu.edu.eg}
\author[F. Bars] {Francesc Bars}
\address{$\bullet$\,\,Francesc Bars}
\address{Departament Matem\`atiques, Edif. C, Universitat Aut\`onoma de Barcelona\\
08193 Bellaterra, Catalonia} \email{francesc@mat.uab.cat}
\author[E. Lorenzo]{Elisa Lorenzo Garc\'ia}
\address{$\bullet$\,\,Elisa Lorenzo Garc\'ia}
\address{IRMAR - Universit\'e de Rennes 1\\
35042 Rennes Cedex, France}
\email{elisa.lorenzogarcia@univ-rennes1.fr}
\thanks{E. Badr and F. Bars are supported by MTM2016-75980-P}
\keywords{}
\subjclass[2010]{}
\begin{document}

\maketitle

\begin{abstract} In this note we provide explicit generators of the Picard groups of cyclic Brauer-Severi varieties defined over the base field. In particular, for all Brauer-Severi surfaces.
To produce these generators we use the Twisting Theory for smooth
plane curves.
\end{abstract}

\section{Introduction}
Let $B/k$ be a Brauer-Severi variety over a perfect field $k$, that
is, a projective variety of dimension $n$ isomorphic over
$\overline{k}$ to $\mathbb{P}^n_k$. Its group the Picard
$\operatorname{Pic}(B)$ it is known to be isomorphic to
$\mathbb{Z}$.
  As far as we know, the first explicit equations defining a non-trivial Brauer-Severi surface in the literature are in \cite{BaBaEl1}. After this,
   an algorithm to compute these equations
  for any Brauer-Severi variety is given in \cite{Lo2}. In the appendix, we explain an alternative way to compute them {for the case of dimension $2$ and} using twists of smooth plane curves.

  In this note, we show an explicit and concrete generator of the Picard group of any Brauer-Severi variety corresponding to a cyclic algebra
  in its class inside the Brauer group $\operatorname{Br}(k)$ of $k$. More precisely, for Brauer-Severi surfaces we obtain the folowwing result.

\begin{thm}\label{thm-main} Let $B$ be the Brauer-Severi surface corresponding to a cyclic algebra $(L/k,\chi,a)$ of dimension $3^2$ as in Theorem \ref{thm-cyclic}. A smooth model of $B$ inside $\mathbb{P}^9_k$ is given by  the intersection  $\cap_{\sigma\in\operatorname{Gal}(L/k)}\,^{\sigma}X$ where $X/L$ is the variety in $\mathbb{P}^9$ defined by the set of equations:
$$
    \begin{array}{c}
    a^2(l_1\omega_0+l_2\omega_6+l_3\omega_9)(l_3\omega_0+l_1\omega_6+l_2\omega_9)^2=(l_3\omega_1+l_1\omega_5+l_2\omega_7)^3\\
   a(l_1\omega_1+l_2\omega_5+l_3\omega_7)(l_3\omega_0+l_1\omega_6+l_2\omega_9)^2=(l_3\omega_1+l_1\omega_5+l_2\omega_7)^2(l_3\omega_2+l_1\omega_3+l_2\omega_8)\\
    a(l_1\omega_2+l_2\omega_3+l_3\omega_8)(l_3\omega_0+l_1\omega_6+l_2\omega_9)^2=(l_3\omega_1+l_1\omega_5+l_2\omega_7)^2(l_3\omega_0+l_1\omega_6+l_2\omega_9)\\
   a(l_2\omega_2+l_3\omega_3+l_1\omega_8)(l_3\omega_0+l_1\omega_6+l_2\omega_9)^2=(l_3\omega_1+l_1\omega_5+l_2\omega_7)(l_3\omega_2+l_1\omega_3+l_2\omega_8)^2\\
    \omega_4(l_3\omega_0+l_1\omega_6+l_2\omega_9)^2=(l_3\omega_1+l_1\omega_5+l_2\omega_7)(l_3\omega_2+l_1\omega_3+l_2\omega_8)(l_3\omega_0+l_1\omega_6+l_2\omega_9)\\
    a(l_2\omega_0+l_3\omega_6+l_1\omega_9)(l_3\omega_0+l_1\omega_6+l_2\omega_9)^2=(l_3\omega_2+l_1\omega_3+l_2\omega_8)^3\\
    (l_2\omega_1+l_3\omega_5+l_1\omega_7)(l_3\omega_0+l_1\omega_6+l_2\omega_9)^2=(l_3\omega_2+l_1\omega_3+l_2\omega_8)^3(l_3\omega_0+l_1\omega_6+l_2\omega_9),
    \end{array}
$$
where $\{l_1,l_2,l_3\}$ is a non-zero trace normal basis of $L$. Its Picard group $\operatorname{Pic}(B)$ is generated by the hyperplane
$$ \omega_0+\omega_6+\omega_9=0,$$
    which is a genus 1 curve with $k$ as a field of definition.
More generally, for a positive element
$d'\in\mathbb{Z}\simeq\operatorname{Pic}(B)$, we have a generator of
$d'\operatorname{Pic}(B)$ given by
   $$
    (l_1\omega_0+l_2\omega_6+l_3\omega_9)^{d'}+(l_2\omega_0+l_3\omega_6+l_1\omega_9)^{d'}
    +(l_3\omega_0+l_1\omega_6+l_2\omega_9)^{d'}=0,
    $$
    and defining a curve of genus $\frac{(3d'-1)(3d'-2)}{2}$ defined over $k$.
\end{thm}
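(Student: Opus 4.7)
The plan is to combine the general structure of the Picard group of a Brauer--Severi variety with the explicit cubic Veronese embedding provided by Theorem~\ref{thm-cyclic}. Recall that $\operatorname{Pic}(B)\hookrightarrow \operatorname{Pic}(B_{\overline k})=\operatorname{Pic}(\mathbb{P}^2_{\overline k})=\mathbb{Z}\cdot [H]$, with image $\operatorname{ind}(B)\,\mathbb{Z}\cdot [H]$. Since the cyclic algebra has prime degree $3$, either $B\simeq \mathbb{P}^2_k$, or the index equals $3$ and every $k$-rational divisor class has degree a multiple of $3$ on $\mathbb{P}^2_{\overline k}$. In Theorem~\ref{thm-cyclic}, the embedding $B\hookrightarrow \mathbb{P}^9_k$ is produced as a twisted form of the cubic (anticanonical) Veronese $\mathbb{P}^2\hookrightarrow\mathbb{P}^9$; hence every hyperplane section of $B\subset\mathbb{P}^9_k$ pulls back, over $\overline k$, to a plane cubic. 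Combined with the divisibility constraint just mentioned, any such hyperplane section defined over $k$ automatically generates $\operatorname{Pic}(B)$.

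The first step is then to exhibit a Galois-invariant hyperplane in $\mathbb{P}^9_k$ cutting out a smooth curve on $B$. Since the coordinates $\omega_i$ are defined over $k$ (the Galois action in the definition $B=\cap_\sigma\,^\sigma X$ only twists the coefficients appearing in the equations of $X$), the hyperplane $\omega_0+\omega_6+\omega_9=0$ is $k$-rational. Its intersection with $B$ is therefore a $k$-rational divisor, and under the cubic Veronese identification over $\overline k$ it corresponds to a plane cubic. One verifies directly that this cubic is smooth; hence by the genus--degree formula it has genus one, and by the preceding paragraph it generates $\operatorname{Pic}(B)$.

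For the statement concerning $d'>0$, let $\sigma$ generate $\operatorname{Gal}(L/k)$ acting as the cycle $l_1\mapsto l_2\mapsto l_3\mapsto l_1$ on the normal basis. Then the three linear forms
\[
L_0:=l_1\omega_0+l_2\omega_6+l_3\omega_9,\q L_1:=l_2\omega_0+l_3\omega_6+l_1\omega_9,\q L_2:=l_3\omega_0+l_1\omega_6+l_2\omega_9
\]
form a single Galois orbit, so that $L_0^{d'}+L_1^{d'}+L_2^{d'}=\operatorname{Tr}_{L/k}(L_0^{d'})$ is Galois-invariant and defines a hypersurface of degree $d'$ over $k$. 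Its intersection with $B$ is a $k$-rational divisor whose class in $\operatorname{Pic}(B)$ equals $d'$ times that of a hyperplane section, i.e.\ a generator of $d'\operatorname{Pic}(B)$. Transported along the Veronese identification, this divisor becomes a plane curve of degree $3d'$ in $\mathbb{P}^2_{\overline k}$, whose genus, assuming smoothness, is $\frac{(3d'-1)(3d'-2)}{2}$ by the genus--degree formula.

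The main difficulty I anticipate is the careful verification that the Galois action really does cyclically permute the linear forms $L_0,L_1,L_2$ (and similarly the other triples of linear forms that appear throughout the equations of $X$), since this requires unravelling the explicit cocycle used in Theorem~\ref{thm-cyclic} to twist the cubic Veronese. Once this action is in hand, the Galois invariance of the cyclic sums and the identification of $\mathbb{P}^9$-hyperplane sections with plane cubics are formal. A secondary, more technical point is to check that the plane curves cut out by the hyperplane and by the degree-$d'$ equation are in fact smooth, which is needed before applying the genus--degree formula to obtain the stated genera.
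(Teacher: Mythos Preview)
Your argument is correct in its essentials but takes a route different from the paper's. The paper does not argue directly with hyperplane sections and Galois-invariant traces. Instead, it first introduces the explicit Fermat-type curves $C^{d'}_a:\ X^{3d'}+a^{d'}Y^{3d'}+a^{2d'}Z^{3d'}=0$ (Lemma~\ref{lemma-curves}), proves in Theorem~\ref{prop-generator} that the twist $C'$ of $C^{d'}_a$ by the cocycle $\xi(\sigma)=[aZ:X:Y]$ lives inside $B$ and generates $d'\operatorname{Pic}(B)$ (via Lichtenbaum's sequence, exactly as you do), and then computes everything by writing down the explicit $10\times 10$ matrix $\operatorname{Ver}_3(\overline{\phi})$ realising the isomorphism $B\to\operatorname{Ver}_3(\mathbb{P}^2)$ over $L$. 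Pulling back the Veronese equations through this matrix yields the seven equations defining $X$, and pulling back the Fermat equation $\omega_0^{d'}+a^{d'}\omega_6^{d'}+a^{2d'}\omega_9^{d'}=0$ yields (after clearing the common factor $a^{2d'}$) exactly your symmetric expression $L_0^{d'}+L_1^{d'}+L_2^{d'}=0$; for $d'=1$ this collapses to $(l_1+l_2+l_3)(\omega_0+\omega_6+\omega_9)$, whence the hyperplane $\omega_0+\omega_6+\omega_9=0$ using that the normal basis has non-zero trace.

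What the paper's detour through Fermat twists buys is precisely the two points you flag as difficulties. First, the Galois action on the linear forms $L_i$ need not be analysed separately: it is encoded once and for all in the matrix $\operatorname{Ver}_3(\overline{\phi})$, computed from the cocycle of Theorem~\ref{thm-cyclic}. Second, and more importantly, smoothness of the divisors is automatic, since $C^{d'}_a$ is a smooth plane curve and twisting by a cocycle is an isomorphism over $\overline{k}$; your approach would instead require a direct verification that the hypersurface $L_0^{d'}+L_1^{d'}+L_2^{d'}=0$ meets $B$ transversally. A small inaccuracy in your write-up: the embedding $B\hookrightarrow\mathbb{P}^9_k$ is not produced in Theorem~\ref{thm-cyclic} (which only records the cocycle); obtaining the equations of $B$ is part of what Theorem~\ref{thm-main} asserts, and the paper derives them from \cite{Lo2} via the explicit matrix above.
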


\section{Brauer-Severi varieties}

\begin{defn}\label{twistsefn}
Let $V$ be a smooth quasi-projective variety over $k$. A variety
$V'$ defined over $k$ is called a twist of $V$ over $k$ if there is
a $\overline{k}$-isomorphism
$V'\otimes_k\overline{k}{\stackrel{\Phi}{\longrightarrow}}\overline{V}:=V\otimes_k\overline{k}.$
The set of all twists of $V$ modulo $k$-isomorphisms is denoted by
$\text{Twist}_k(V)$, whereas the set of all twists $V'$ of $V$ over
$k$, such that $V'\times_kK$ is $K$-isomorphic to $V\times_kK$ is
denoted by $\operatorname{Twist}(V,K/k)$.
\end{defn}

\begin{thm}\cite[Ch.III, \S 1.3]{Se}
Following the above notations, for any Galois extension $K/k$, there
exists a bijection
\begin{eqnarray*}
\theta&:&\operatorname{Twist}(V,K/k)\rightarrow\operatorname{H}^1(\operatorname{Gal}(K/k),\operatorname{Aut}_K(V\times_kK))\\
&&V'\times_k K{\stackrel{\Phi}{{\cong}}}V\times_k K\mapsto
\xi(\tau):=\Phi\circ\,^{\tau}\Phi^{-1}
\end{eqnarray*}
 where $\operatorname{Aut}_K(.)$ denotes the group of $K$-automorphisms of the object over $K$.
\end{thm}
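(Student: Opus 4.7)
The plan is to realize $B$ as the Galois descent of the $3$-uple Veronese embedding $\nu_3\colon\mathbb{P}^2\hookrightarrow\mathbb{P}^9$ along the cocycle produced by the cyclic algebra $(L/k,\chi,a)$, and then to identify the generator of $\operatorname{Pic}(B)$ explicitly through the change of basis given by the normal basis $\{l_1,l_2,l_3\}$.

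First, identify $\omega_0,\ldots,\omega_9$ with the ten cubic monomials in $x,y,z$, placing the pure cubes at $\omega_0=x^3,\omega_6=y^3,\omega_9=z^3$, the unique fully-symmetric monomial at $\omega_4=xyz$, and the six mixed monomials at $\omega_1,\omega_5,\omega_7,\omega_2,\omega_3,\omega_8$ so that a generator $\sigma\in\operatorname{Gal}(L/k)$ (acting on $(x,y,z)$ by a cyclic monomial matrix with an $a$-twist, as in Theorem \ref{thm-cyclic}) permutes them in three size-three orbits $\{\omega_0,\omega_6,\omega_9\}$, $\{\omega_1,\omega_5,\omega_7\}$, $\{\omega_2,\omega_3,\omega_8\}$ with the appropriate powers of $a$. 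The image $\nu_3(\mathbb{P}^2)$ is cut out by the well-known monomial identities $\omega_i\omega_j=\omega_k\omega_l$ and $\omega_p^3=\omega_q\omega_r\omega_s$ among cubic monomials. Pushing the cocycle through $\nu_3$ provides a descent datum on the Veronese whose $k$-structure is exactly $B$; by the twisting theorem above, $B\in\operatorname{Twist}(\mathbb{P}^2,L/k)$ corresponds to this cocycle.

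To obtain equations over $k$, diagonalize the $\sigma$-action within each orbit using the normal basis by replacing the triple $(\omega_0,\omega_6,\omega_9)$ with its Vandermonde images $(l_1\omega_0+l_2\omega_6+l_3\omega_9,\,l_2\omega_0+l_3\omega_6+l_1\omega_9,\,l_3\omega_0+l_1\omega_6+l_2\omega_9)$, and similarly for the two mixed orbits. In these new $L$-linear coordinates, $\sigma$ acts by the canonical cyclic scalar (a cube root of unity times a power of $a$), so each Veronese monomial relation decouples into a relation between the three new triples. Normalizing the resulting relations produces precisely the seven cubic equations displayed in the statement, with the $a^2$ and $a$ factors reflecting the $\sigma$-twist on the pure-cubes orbit. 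Intersecting ${}^{\sigma}X$ over $\sigma\in\operatorname{Gal}(L/k)$ descends these equations from $L$ to $k$ and yields the smooth model $B\subset\mathbb{P}^9_k$. The principal obstacle here is simply the book-keeping required to match the transformed Veronese identities with the exact seven displayed expressions; once the orbit ordering and normalization are fixed, the verification is a direct substitution.

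Finally, for the Picard group: $\operatorname{Pic}(\bar B)\cong\mathbb{Z}\cdot H$ with $H$ the line class, and since the Brauer class of $B$ has period $3$, the image of $\operatorname{Pic}(B)\hookrightarrow\operatorname{Pic}(\bar B)$ is $3\mathbb{Z}\cdot H$. A $k$-rational generator is therefore any $\operatorname{Gal}$-invariant cubic plane curve on the geometric $\mathbb{P}^2$. The trace of $l_1\omega_0+l_2\omega_6+l_3\omega_9$ over $\operatorname{Gal}(L/k)$ equals $\operatorname{Tr}_{L/k}(l_1)\,(\omega_0+\omega_6+\omega_9)$, which (with nonzero trace by assumption) shows $\omega_0+\omega_6+\omega_9=0$ is $k$-rational; a short computation using the inverse Vandermonde shows this hyperplane pulls back through $\nu_3$ to the Fermat cubic $x^3+y^3+z^3=0$ on $\mathbb{P}^2$, smooth of genus $1$, and hence generates $\operatorname{Pic}(B)$. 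For general $d'\geq 1$, the same symmetrization argument shows the displayed sum of $d'$-th powers is $\operatorname{Gal}(L/k)$-invariant and, via the same inverse Vandermonde, pulls back to the smooth Fermat-type curve $x^{3d'}+y^{3d'}+z^{3d'}=0$ on $\mathbb{P}^2$, a plane curve of degree $3d'$ of genus $(3d'-1)(3d'-2)/2$ by the degree-genus formula; this represents $d'$ times the generator, completing the proof.
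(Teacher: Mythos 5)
Your proposal does not address the statement you were asked to prove. The statement is the general twisting bijection
$\theta:\operatorname{Twist}(V,K/k)\rightarrow\operatorname{H}^1(\operatorname{Gal}(K/k),\operatorname{Aut}_K(V\times_kK))$
for an arbitrary smooth quasi-projective variety $V$ and an arbitrary Galois extension $K/k$ (the paper cites it to Serre, Ch.~III, \S 1.3, and gives no proof of its own). What you have written instead is a proof sketch of Theorem \ref{thm-main}: you construct equations for a specific Brauer-Severi surface via the $3$-uple Veronese embedding and identify generators of its Picard group. That is an \emph{application} of the twisting theorem (indeed you invoke it mid-argument: ``by the twisting theorem above, $B\in\operatorname{Twist}(\mathbb{P}^2,L/k)$ corresponds to this cocycle''), not a proof of it.

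A proof of the actual statement would have to establish, in order: (i) that $\xi(\tau)=\Phi\circ{}^{\tau}\Phi^{-1}$ satisfies the cocycle condition $\xi(\tau\tau')=\xi(\tau)\cdot{}^{\tau}\xi(\tau')$; (ii) that the cohomology class of $\xi$ is independent of the choice of the isomorphism $\Phi$ and of the representative of the $k$-isomorphism class of $V'$, so that $\theta$ is well defined; (iii) injectivity, i.e.\ that two twists producing cohomologous cocycles are $k$-isomorphic; and (iv) surjectivity, which is the substantial step: given a cocycle $\xi$ one must produce a twist realizing it, via effectivity of Galois descent for quasi-projective varieties (descent of the variety together with its $k$-structure from the twisted Galois action). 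None of these four points appears in your write-up. If your intent was to prove Theorem \ref{thm-main}, your outline is broadly consistent with the paper's Section 5 (twisting the Veronese image by a matrix realizing the cocycle, then using the Lichtenbaum exact sequence and the degree computation of Theorem \ref{prop-generator} for the Picard statement), but as a proof of the stated bijection it is a non-starter.
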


For $K=\overline{k}$, the right hand side will be denoted by $\operatorname{H}^1(k,\operatorname{Aut}_{\overline{k}}(\overline{V}))$ or simply $\operatorname{H}^1(k,\operatorname{Aut}(\overline{V}))$.

\begin{defn}
A Brauer-Severi variety $B$ over $k$ of dimension $n$ is a twist of $\mathbb{P}^n_k$. The set of all isomorphism classes of Brauer-Severi varieties of dimension $r$ over $k$ is denoted by $\operatorname{BS}^k_r$.
\end{defn}

\begin{cor}(cf. J. Jahnel \cite[Corollary
4.7]{Ja})\label{bijectionBS}\label{cor-Jah}
The set $\operatorname{BS}^k_n$ is in bijection with $\operatorname{Twist}_k(\mathbb{P}^n_k)=\operatorname{H}^1(k,\operatorname{PGL}_{n+1}(\overline{k}))
$.
\end{cor}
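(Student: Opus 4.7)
The plan is to deduce the corollary directly from the two ingredients already in the excerpt: Definition 2.3 (a Brauer--Severi variety is a twist of $\mathbb{P}^n_k$) and the preceding theorem (the bijection $\operatorname{Twist}(V,K/k)\leftrightarrow H^1(\operatorname{Gal}(K/k),\operatorname{Aut}_K(V\times_k K))$ for a Galois extension $K/k$). The only non-formal input will be the classical identification of $\operatorname{Aut}_{\overline{k}}(\mathbb{P}^n_{\overline{k}})$ with $\operatorname{PGL}_{n+1}(\overline{k})$.

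First I would dispose of the equality $\operatorname{BS}^k_n=\operatorname{Twist}_k(\mathbb{P}^n_k)$: this is exactly Definition 2.3 after passing to $k$-isomorphism classes, so no argument is needed. All the substantive content therefore lies in the second equality $\operatorname{Twist}_k(\mathbb{P}^n_k)=H^1(k,\operatorname{PGL}_{n+1}(\overline{k}))$.

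To reduce to the preceding theorem, I would rewrite $\operatorname{Twist}_k(\mathbb{P}^n_k)$ as the direct limit of $\operatorname{Twist}(\mathbb{P}^n_k,K/k)$, where $K$ runs over the finite Galois subextensions of $\overline{k}/k$. The inclusion $\supseteq$ is immediate. For the reverse inclusion, any twist $V'$ comes equipped with a $\overline{k}$-isomorphism $\Phi\colon V'\otimes_k\overline{k}\to\mathbb{P}^n_{\overline{k}}$; since both sides are of finite type over $k$, $\Phi$ is defined by finitely many coefficients in $\overline{k}$, which together generate a finite subextension of $\overline{k}/k$, and its Galois closure $K$ realises $V'$ as an element of $\operatorname{Twist}(\mathbb{P}^n_k,K/k)$. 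Applying the preceding theorem for each such $K$ and using the compatibility of continuous Galois cohomology with direct limits yields
$$\operatorname{Twist}_k(\mathbb{P}^n_k)\;\cong\;\varinjlim_{K/k}H^1(\operatorname{Gal}(K/k),\operatorname{Aut}_K(\mathbb{P}^n_K))\;=\;H^1(k,\operatorname{Aut}_{\overline{k}}(\mathbb{P}^n_{\overline{k}})).$$

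Finally, I would invoke the classical computation of $\operatorname{Aut}_{\overline{k}}(\mathbb{P}^n_{\overline{k}})$: every automorphism of $\mathbb{P}^n_{\overline{k}}$ as a variety preserves $\mathcal{O}(1)$ up to isomorphism and hence acts linearly on $H^0(\mathbb{P}^n_{\overline{k}},\mathcal{O}(1))\cong\overline{k}^{n+1}$, and two elements of $\operatorname{GL}_{n+1}(\overline{k})$ induce the same automorphism exactly when they differ by a non-zero scalar. This gives $\operatorname{Aut}_{\overline{k}}(\mathbb{P}^n_{\overline{k}})=\operatorname{PGL}_{n+1}(\overline{k})$, which combined with the display above completes the proof. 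The main (mild) obstacle is the direct-limit step: one has to check that every twist is trivialised by some finite Galois extension, a standard descent observation. Once that is in place, the statement is a formal consequence of the preceding theorem together with the classical description of the automorphism group of projective space.
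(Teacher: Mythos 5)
The paper does not prove this corollary at all: it simply cites Jahnel's Corollary 4.7, relying on Definition 2.1, Serre's twisting theorem (Theorem 2.2) and Definition 2.3 as stated context. Your argument supplies exactly the standard proof that this citation stands in for, and it is correct: the identification $\operatorname{BS}^k_n=\operatorname{Twist}_k(\mathbb{P}^n_k)$ is definitional, the reduction to finite Galois levels via the finitely many coefficients of the trivializing isomorphism $\Phi$ is the standard descent observation, and $\operatorname{Aut}_{\overline{k}}(\mathbb{P}^n_{\overline{k}})=\operatorname{PGL}_{n+1}(\overline{k})$ via the action on $H^0(\mathcal{O}(1))$ is classical. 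Two small points you leave implicit but should acknowledge: (i) Serre's bijection requires effectivity of Galois descent, which holds here because $\mathbb{P}^n$ is quasi-projective; and (ii) to identify $\varinjlim_K H^1(\operatorname{Gal}(K/k),\operatorname{Aut}_K(\mathbb{P}^n_K))$ with the continuous cohomology $H^1(k,\operatorname{PGL}_{n+1}(\overline{k}))$ one needs $\operatorname{Aut}_{\overline{k}}(\mathbb{P}^n_{\overline{k}})^{\operatorname{Gal}(\overline{k}/K)}=\operatorname{Aut}_K(\mathbb{P}^n_K)$ (equivalently $\operatorname{PGL}_{n+1}(\overline{k})^{\operatorname{Gal}(\overline{k}/K)}=\operatorname{PGL}_{n+1}(K)$, which follows from Hilbert 90 applied to the center of $\operatorname{GL}_{n+1}$). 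With those two remarks added, your proof is complete and is essentially the argument behind the reference.
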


\subsection{Brauer-Severi surfaces}

Let $L/k$ be a Galois cyclic cubic extension, with $\text{Gal}(L/k)=\langle\sigma\rangle$. Fix a character 
$$\chi:\text{Gal}(L/k)\rightarrow\Z/3\Z,$$ which is equivalent to fix $\sigma'$ a generator of $Gal(L/k)$ such that $\chi(\sigma')=1$.
Given $a\in k^*$, we consider a $k$-algebra $(L/k,\chi,a)$ as
follows: As an additive group, $(L/k,\chi,a)$ is an $3$-dimensional
vector space over $L$ with basis $1,e,e^2$: $(L/k,\chi,a):=L\oplus
Le\oplus Le^2.$ Multiplication is given by the relations:
$e\,.\,\lambda=\sigma'(\lambda)\,.\,e$ for $\lambda\in L$, and
$e^3=a$. The algebra $(L/k,\chi,a)$ is called the \emph{cyclic
algebra} associated to the character $\chi$ and the element $a\in
k$.

\begin{thm}\label{thm-cyclic}
Any non-trivial Brauer-Severi surface $B$ over $k$ corresponds, modulo $k$-isomorphism, to a cyclic algebra $(L/k,\chi,a)$ of dimension $9$, for some Galois cubic extension $L/k$ and $a\in k^*$, which is not a norm of an element of $L$. If $\operatorname{Gal}(L/k)=\langle\sigma\rangle$, then the image of $B$ in $\operatorname{H}^1(k,\operatorname{PGL}_3(\overline{k}))$ is given by
$$
\xi(\sigma)=
         \begin{pmatrix}
           0 & 0 & a \\
           1 & 0 & 0 \\
           0 & 1 & 0
           \end{pmatrix}
$$
where $\operatorname{Gal}(L/k)=<\sigma>$ and $\chi(\sigma)=-1\modu 3$. Moreover, the Brauer-Severi surface attached to $(L/k,\chi,a)\in\operatorname{Az}_3^k$ is trivial if
and only if $a$ is the norm of an element of $L$
\end{thm}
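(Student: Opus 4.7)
The plan relies on three classical inputs. First, Corollary~\ref{bijectionBS} identifies $\operatorname{BS}_2^k$ with $H^1(k,\operatorname{PGL}_3(\overline{k}))$, which via the standard identification $\operatorname{PGL}_3=\operatorname{Aut}(M_3)$ is in canonical bijection with isomorphism classes of central simple $k$-algebras of degree $3$; a non-trivial Brauer--Severi surface corresponds to a non-split such algebra $A$. Second, I will invoke the classical theorem of Wedderburn that every central simple algebra of degree $3$ is a cyclic algebra, so $A\cong(L/k,\chi,a)$ for some Galois cubic $L/k$, character $\chi$ of $\operatorname{Gal}(L/k)$, and $a\in k^*$. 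Third, the splitting criterion is the classical norm theorem: $(L/k,\chi,a)\cong M_3(k)$ if and only if $a$ lies in $N_{L/k}(L^*)$; equivalently, the Brauer class is $\chi\cup[a]\in H^2(k,\overline{k}^*)$, which vanishes iff $a$ is a norm. Together these furnish the first and last assertions of the theorem.

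The heart of the proof is the explicit cocycle. Over $L$ one has the splitting $\phi\colon A\otimes_k L \xrightarrow{\sim} M_3(L)$ obtained by letting $A$ act on itself by left multiplication, viewed as a free right $L$-module of rank $3$ with ordered basis $\{1,e,e^2\}$. In this basis the element $e\in A$ acts by the companion matrix
\[
E \;=\; \begin{pmatrix} 0 & 0 & a \\ 1 & 0 & 0 \\ 0 & 1 & 0 \end{pmatrix}
\]
of $x^3-a$, while after diagonalising $L\otimes_k L\cong L\times L\times L$ the element $\lambda\otimes 1$ goes to $\operatorname{diag}(\lambda,\sigma\lambda,\sigma^2\lambda)$; the choice $\chi(\sigma)=-1\bmod 3$ fixes the direction of the shift and ensures the image is $E$ rather than $E^{-1}$. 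Since the entries of $E$ lie in $k$, $E$ is fixed by the Galois action on $M_3(L)$, while ${}^{\sigma}\phi$ acts on $\operatorname{diag}(\lambda,\sigma\lambda,\sigma^2\lambda)$ by the cyclic shift realised by conjugation by $E$. Passing to Brauer--Severi varieties, the bijection $\theta$ of \cite[Ch.III,\S1.3]{Se} then yields $\xi(\sigma)=\Phi\circ{}^\sigma\Phi^{-1}$ equal to conjugation by $E$ in $\operatorname{PGL}_3(L)$, which is exactly the class of the displayed matrix.

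The main obstacle is really book-keeping rather than depth: in the cocycle step one can easily end up with $E^{-1}$ instead of $E$, or a conjugate of $E$ by a fixed permutation, depending on left vs.\ right ideals in the Brauer--Severi construction, the generator chosen for $\operatorname{Gal}(L/k)$, and the sign convention for $\chi$. The content underlying the three classical inputs --- existence of a cyclic splitting field of degree $3$, the norm criterion, and the CSA/cohomology dictionary --- is standard and would be cited; the novelty of the step is pinning down the cocycle in the precise normalisation used throughout the rest of the paper.
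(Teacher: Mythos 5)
Your proposal follows essentially the same route as the paper: the paper's entire argument is the citation chain $H^1(k,\operatorname{PGL}_3(\overline{k}))\leftrightarrow\operatorname{Az}^3_k$ (Serre), cyclicity of degree-$3$ central simple algebras (Wedderburn), the explicit cocycle of a cyclic algebra (Tengan, Example 5.5), and the norm splitting criterion (Hanke). Your write-up is correct and in fact supplies more than the paper does, since you actually derive the companion-matrix cocycle from the splitting $A\otimes_k L\cong M_3(L)$ (and your normalisation $\chi(\sigma)=-1\bmod 3$, giving $\operatorname{diag}(\lambda,\sigma\lambda,\sigma^2\lambda)$ shifted by conjugation by $E$, checks out) where the paper simply cites it.
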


This Theorem is a conclusion from the fact that
$\operatorname{H}^1(k,\operatorname{PGL}_n(\overline{k}))$ is in
correspondence with the set $\operatorname{Az}^n_k$ of central
simple algebras of dimension $n^2$ over $k${, modulo
$k$-isomorphisms} \cite[chap.X.5]{SeL}, the fact that
$\operatorname{Az}^3_k$ contains only cyclic algebras
\cite{Wed} and the description of cyclic central simple algebras given
in \cite[Example 5.5]{Ten}. For the last statement see \cite[\S
2.1]{Han}.

%

\section{Smooth plane curves}
Fix an algebraic closure $\overline{k}$ of a perfect field $k$. By a
smooth plane curve $C$ over $k$ of degree $d\geq3$, we mean a curve
$C/k$, which is $k$-isomorphic to the zero-locus of a homogenous polynomial equation
$F_C(X,Y,Z)=0$ in $\mathbb{P}^2_k$ without singularities of degree
$d$. In that case, the geometric genus of $C$ is
$g=\frac{1}{2}(d-1)(d-2)$. Assuming that $d\geq 4$, the base
extension $C\times_k\overline{k}$ admits a unique $g^2_d$-linear
system up to conjugation in
$\operatorname{Aut}(\mathbb{P}^2_{\overline{k}})=\operatorname{PGL}_3(\overline{k})$.
It induces a unique embedding $\Upsilon:\overline{C}\rightarrow
\mathbb{P}^2_{\overline{k}}$, up to
$\operatorname{PGL}_3(\overline{k})$-conjugation giving a
$\operatorname{Gal}(\overline{k}/k)$-equivariant map
$\operatorname{Aut}(\overline{C})\hookrightarrow
\operatorname{PGL}_3(\overline{k})$ if $d\geq 4$.

\begin{thm}\label{thm-XR} (Ro\'e-Xarles,\cite{RoXa}) Let $C$ be a curve over $k$ such that $\overline{C}=C\times_k\overline{k}$ is
a smooth plane curve over $\overline{k}$ of degree $d\geq 4$. Let
$\Upsilon:\overline{C}\hookrightarrow \mathbb{P}^2_{\overline{k}}$
be a morphism given by (the unique) $g^2_d$-linear system over
$\overline{k}$, then there exists a Brauer-Severi variety $B$ (of
dimension two) defined over $k$, together with a $k$-morphism
$f:C\hookrightarrow B$ such that
$f\otimes_k\overline{k}:\overline{C}\rightarrow\mathbb{P}^2_{\overline{k}}$
is equal to $\Upsilon$.
\end{thm}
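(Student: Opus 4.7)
The plan is to build the Brauer-Severi surface $B$ from a Galois cocycle encoded in the uniqueness of the $g^2_d$, and then to descend $\Upsilon$ itself, by Galois descent, to a $k$-morphism $f:C\hookrightarrow B$.

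Set $\Gamma := \operatorname{Gal}(\overline{k}/k)$, and for each $\tau\in\Gamma$ write $\tau_{\overline{C}}$ and $\tau_{\mathbb{P}^2}$ for the natural Galois actions on $\overline{C}=C\times_k\overline{k}$ and on $\mathbb{P}^2_{\overline{k}}$ coming from the given $k$-structures. Put $^{\tau}\Upsilon := \tau_{\mathbb{P}^2}\circ\Upsilon\circ\tau_{\overline{C}}^{-1}$; this is again an embedding of $\overline{C}$ into $\mathbb{P}^2_{\overline{k}}$ realising a $g^2_d$-linear system. By the uniqueness (up to $\operatorname{PGL}_3(\overline{k})$-conjugation) of such a linear system recalled just above the statement, there is a unique $\xi(\tau)\in\operatorname{PGL}_3(\overline{k})$ with $\Upsilon = \xi(\tau)\circ{^{\tau}\Upsilon}$. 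Comparing $^{\sigma\tau}\Upsilon$ with $^{\sigma}({^{\tau}\Upsilon})$ and invoking uniqueness a second time forces the cocycle relation $\xi(\sigma\tau) = \xi(\sigma)\cdot{^{\sigma}\xi(\tau)}$; continuity of $\xi$ is automatic because $C$ and $\Upsilon$ are already defined over a finite sub-extension of $\overline{k}/k$.

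By Corollary \ref{cor-Jah} the class $[\xi]\in\operatorname{H}^1(k,\operatorname{PGL}_3(\overline{k}))$ represents a Brauer-Severi surface $B/k$ equipped with a $\overline{k}$-isomorphism $\Psi:B\times_k\overline{k}\stackrel{\sim}{\to}\mathbb{P}^2_{\overline{k}}$ whose descent datum reads $\Psi\circ\tau_B = \xi(\tau)\circ\tau_{\mathbb{P}^2}\circ\Psi$, in accordance with the recipe of Definition \ref{twistsefn}. Set $\overline{f} := \Psi^{-1}\circ\Upsilon:\overline{C}\to B\times_k\overline{k}$. Unwinding the defining equation for $\xi(\tau)$ gives $\Upsilon\circ\tau_{\overline{C}} = \xi(\tau)\circ\tau_{\mathbb{P}^2}\circ\Upsilon$, and combining this with the formula for $\tau_B$ yields $\overline{f}\circ\tau_{\overline{C}} = \tau_B\circ\overline{f}$. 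Thus $\overline{f}$ is $\Gamma$-equivariant, and Galois descent for morphisms of quasi-projective $k$-varieties produces the desired $k$-morphism $f:C\to B$ with $f\otimes_k\overline{k} = \Upsilon$ under the identification $\Psi$. Since being a closed immersion descends under faithfully flat base change, $f$ is itself a closed immersion.

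The only real subtlety is to keep the sign/inverse conventions in the definition of $\xi(\tau)$ consistent with those attached to $B$ under the bijection of Corollary \ref{cor-Jah}; once the conventions match, the cocycle identity and the Galois equivariance of $\overline{f}$ both come out of essentially the same manipulation, so no geometric input beyond the cited uniqueness of the $g^2_d$ is required.
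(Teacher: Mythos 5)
The paper does not prove this statement; it is imported verbatim from Ro\'e--Xarles \cite{RoXa}, so there is no internal proof to compare against. Your Galois-descent argument is the standard (and essentially the cited) route, and it is correct: the cocycle built from the uniqueness of the $g^2_d$, the identification of its class in $\operatorname{H}^1(k,\operatorname{PGL}_3(\overline{k}))$ with a Brauer--Severi surface via Corollary \ref{cor-Jah}, and the equivariance of $\overline{f}=\Psi^{-1}\circ\Upsilon$ all check out. The one step you should make explicit is why $\xi(\tau)$ is \emph{unique} rather than unique only up to $\operatorname{Aut}(\overline{C})$: if $M_1\circ{}^{\tau}\Upsilon=M_2\circ{}^{\tau}\Upsilon$ then $M_2^{-1}M_1$ fixes pointwise a nondegenerate plane curve of degree $d\geq 4$, which contains four points in general position, forcing $M_2^{-1}M_1=\mathrm{id}$ in $\operatorname{PGL}_3(\overline{k})$; without this, the phrase ``invoking uniqueness a second time'' in your derivation of the cocycle identity would not be justified.
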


In \cite{BaBaEl1} we constructed twists of smooth plane curves over
$k$ not having smooth plane model over $k$. These twists happened to
be contained in Brauer-Severi surfaces as in Theorem \ref{thm-XR}.

\begin{thm}\label{thm-Sigma}[Theorem 3.1 in \cite{BaBaEl1}] Given a smooth plane curve over $k$: $C/k\subseteq\mathbb{P}^2$ with
degree $d\geq 4$, there exists a natural map
$$
\Sigma:\,\operatorname{H}^1(k,\operatorname{Aut}(C))\rightarrow\operatorname{H}^1(k,\operatorname{PGL}_3(\overline{k})),
$$
whose
$\Sigma^{-1}([\mathbb{P}^2_k])$ is the set of
twists of $C$ admitting a smooth plane model over $k$, where
$[\mathbb{P}^2_k]$ is the trivial class associated to the trivial
Brauer-Severi surface of the projective {plane} over $k$.

\end{thm}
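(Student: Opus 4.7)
The plan is to construct $\Sigma$ as the map induced on first non-abelian cohomology by the inclusion $\iota:\operatorname{Aut}(\overline{C})\hookrightarrow\operatorname{PGL}_3(\overline{k})$ coming from the unique $g_d^2$-linear system on $\overline{C}$, as recalled just before Theorem \ref{thm-XR}. The crucial input is that this $g_d^2$-system is intrinsic to $\overline{C}$, hence stable under the natural $\operatorname{Gal}(\overline{k}/k)$-action, so $\iota$ is Galois-equivariant. Applying $H^1(k,-)$ (of pointed sets) then produces the desired map
$$\Sigma := H^1(\iota)\colon\, H^1(k,\operatorname{Aut}(\overline{C}))\longrightarrow H^1(k,\operatorname{PGL}_3(\overline{k})).$$
Its naturality is automatic from functoriality of non-abelian $H^1$.

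Next I would translate both sides through the twist dictionary. A class $[\xi]$ on the left corresponds, by the theorem of Serre \cite[Ch.III,\S 1.3]{Se}, to a twist $C_\xi\in\operatorname{Twist}_k(C)$; its image $\Sigma([\xi])$ corresponds, via Corollary \ref{bijectionBS}, to a Brauer-Severi surface $B_\xi$. The key verification is that $B_\xi$ is precisely the Brauer-Severi surface attached to $C_\xi$ by Theorem \ref{thm-XR}: indeed, twisting the embedding $\Upsilon\colon\overline{C}\hookrightarrow\mathbb{P}^2_{\overline{k}}$ by the cocycle $\iota\circ\xi$ through the standard twist/cocycle correspondence produces a $k$-morphism $C_\xi\hookrightarrow B_\xi$ whose base change to $\overline{k}$ recovers $\Upsilon$, which characterizes the Ro\'e--Xarles container uniquely.

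Finally, I would identify the fibre $\Sigma^{-1}([\mathbb{P}^2_k])$. By Corollary \ref{bijectionBS}, $\Sigma([\xi])=[\mathbb{P}^2_k]$ is equivalent to $B_\xi\cong_k\mathbb{P}^2_k$. In that case, composing $C_\xi\hookrightarrow B_\xi$ with such a $k$-isomorphism exhibits $C_\xi$ as a smooth plane curve over $k$ of degree $d$. Conversely, if $C_\xi$ admits a smooth plane model over $k$ of degree $d\geq 4$, the uniqueness of the $g_d^2$-linear system forces the Ro\'e--Xarles container of $C_\xi$ to be $k$-isomorphic to $\mathbb{P}^2_k$, whence $\Sigma([\xi])=[\mathbb{P}^2_k]$.

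The main obstacle I anticipate is the compatibility check in the middle step: one must verify that pushing a cocycle $\xi$ forward through $\iota$ yields, under the bijection of Corollary \ref{bijectionBS}, the \emph{same} Brauer-Severi surface that Theorem \ref{thm-XR} associates to the twist $C_\xi$. This amounts to a naturality statement for Galois descent of the canonical $g_d^2$-embedding, and it is exactly what makes the characterization of $\Sigma^{-1}([\mathbb{P}^2_k])$ as the set of twists with a smooth plane model over $k$ work without loss of information.
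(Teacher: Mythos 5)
The paper does not reprove this statement—it is quoted verbatim from \cite{BaBaEl1} (Theorem 3.1 there), with the surrounding text and Remark \ref{rem-Sigma} indicating exactly the intended argument: $\Sigma$ is induced on nonabelian $\operatorname{H}^1$ by the Galois-equivariant inclusion $\operatorname{Aut}(\overline{C})\hookrightarrow\operatorname{PGL}_3(\overline{k})$ coming from the unique $g^2_d$-system, and the fibre over the trivial class is read off by identifying $\Sigma$ with the assignment of a twist to its Ro\'e--Xarles container. Your proposal reconstructs precisely this argument, including the one genuinely delicate point (the compatibility of the cohomological pushforward with the geometric container of Theorem \ref{thm-XR}), so it is correct and takes essentially the same route as the source.
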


\begin{rem}[Remark 3.2 in \cite{BaBaEl1}]\label{rem-Sigma} We can reinterpret the map $\Sigma$ in Theorem \ref{thm-Sigma} as the map that sends a twist $C'$ to the Brauer- Severi variety $B$ in Theorem \ref{thm-XR}.
\end{rem}

These results suggest the opposite question, instead of given the curve $C$ and the twist $C'$ and finding the Brauer-Severi surface $B$,
fixing the Brauer-Severi surface $B$ and trying to find the right curve $C$ and the right twist $C'$ to find the $k$-morphism $f:\,C'\hookrightarrow B$.

The main idea is looking for smooth plane curves $C$ of degree a
multiple of $3$, otherwise all their twists are smooth plane curves
over $k$, see Theorem $2.6$ in \cite{BaBaEl1}, and having an
automorphism of the form $[aZ:X:Y]$ to define the twist $C'$ given
by the cocycle that sends a generator $\sigma$ of the degree $3$
cyclic extension $L/k$ defining $B$ to the automorphism $[aZ:X:Y]$.

\begin{lem}\label{lemma-curves}
    For any $a\in {k}^*$ and $d'\in\mathbb{Z}_{\geq1}$, the equation $X^{3d'}+a^{d'}Y^{3d'}+a^{2d'}Z^{3d'}=0,$
    defines a smooth plane curve $C^{d'}_{a}$ over $k$ of degree $3d'$, such that $[aZ:X:Y]$ is an automorphism.
\end{lem}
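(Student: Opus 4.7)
The statement breaks into two independent claims: smoothness of the Fermat-type plane curve and the fact that the specified projective linear map restricts to an automorphism.

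For smoothness, the plan is to apply the Jacobian criterion. Writing $F(X,Y,Z) = X^{3d'} + a^{d'} Y^{3d'} + a^{2d'} Z^{3d'}$, I would compute the partial derivatives
\[
\partial_X F = 3d'\, X^{3d'-1},\qquad \partial_Y F = 3d'\,a^{d'}\, Y^{3d'-1},\qquad \partial_Z F = 3d'\,a^{2d'}\, Z^{3d'-1},
\]
and observe that, assuming $\mathrm{char}(k)\nmid 3d'$ (implicit in the setting, since otherwise $F$ would be a $p$-th power and no Fermat-type curve would be smooth), the common zero locus of these three forms is $\{X=Y=Z=0\}$ because $a\in k^*$. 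That is not a point of $\mathbb{P}^2_k$, so the curve is smooth of the expected dimension, and its degree is evidently $3d'$.

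For the automorphism claim, I would first note that $[aZ:X:Y]$ is represented by the matrix
\[
M=\begin{pmatrix} 0 & 0 & a \\ 1 & 0 & 0 \\ 0 & 1 & 0 \end{pmatrix},
\]
which has determinant $a\neq 0$ and hence belongs to $\mathrm{PGL}_3(k)$, defining an automorphism $\varphi$ of $\mathbb{P}^2_k$. The key computation is then
\[
F(\varphi(X,Y,Z)) = (aZ)^{3d'} + a^{d'} X^{3d'} + a^{2d'} Y^{3d'} = a^{d'}\bigl(X^{3d'} + a^{d'}Y^{3d'} + a^{2d'}Z^{3d'}\bigr) = a^{d'}\, F(X,Y,Z),
\]
so $\varphi$ preserves the vanishing locus of $F$ and therefore restricts to an automorphism of $C_a^{d'}$ defined over $k$.

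The proof involves no real obstacle; the only subtlety worth flagging is the hidden hypothesis $\mathrm{char}(k)\nmid 3d'$, which is needed for the Jacobian criterion to apply non-trivially. Given that, both parts reduce to a direct inspection of the defining polynomial, and the particular choice of the coefficients $1$, $a^{d'}$, $a^{2d'}$ is precisely what makes the cyclic permutation twisted by $a$ preserve $F$ up to the scalar $a^{d'}$, in exact parallel with the $3\times 3$ cocycle matrix appearing in Theorem \ref{thm-cyclic}.
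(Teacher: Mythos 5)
Your proof is correct, and it is the routine verification the authors evidently had in mind: the paper states Lemma~\ref{lemma-curves} without any proof at all, so there is nothing to compare against beyond the standard Jacobian-criterion-plus-substitution argument you give. The substitution check $F(aZ,X,Y)=a^{3d'}Z^{3d'}+a^{d'}X^{3d'}+a^{2d'}Y^{3d'}=a^{d'}F(X,Y,Z)$ is exactly right, and your observation that one must implicitly assume $\mathrm{char}(k)\nmid 3d'$ (otherwise $F$ becomes a $p$-th power over the perfect field $k$ and the curve is not even reduced) is a genuine, if minor, omission in the paper's statement that is worth flagging.
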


\section{The Picard group}
\begin{thm}(Lichtenbaum, see \cite[Theorem 5.4.10]{GS})\label{imprtiantexactsequ}
Let $B$ be a Brauer-Severi variety over $k$. Then, there is an exact sequence
\begin{eqnarray*}
0\longrightarrow\operatorname{Pic}(B)\longrightarrow\operatorname{Pic}(B\otimes_k\overline{k}){\stackrel{\operatorname{deg}}{\cong}}\Z{\stackrel{\delta}{\longrightarrow}}\mathrm{Br}(k).
\end{eqnarray*}
The map $\delta$ sends $1$ to the Brauer class corresponding to $B$.
\end{thm}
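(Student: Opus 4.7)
The plan is to extract the desired exact sequence from the Hochschild--Serre (Leray) spectral sequence for \'etale cohomology with $\mathbb{G}_m$-coefficients,
$$E_2^{p,q}=H^p\bigl(G_k,\,H^q(\overline{B},\mathbb{G}_m)\bigr)\Longrightarrow H^{p+q}(B,\mathbb{G}_m),$$
attached to the structure morphism $B\to\operatorname{Spec}(k)$, where $G_k:=\operatorname{Gal}(\overline{k}/k)$, and then to match its differential $d_2$ with the Brauer class of $B$.

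First I would identify the low-degree entries. Hilbert 90 gives $H^1(G_k,\overline{k}^*)=0$. Since $\overline{B}\cong\mathbb{P}^n_{\overline{k}}$, one has $H^1(\overline{B},\mathbb{G}_m)=\operatorname{Pic}(\overline{B})\cong\mathbb{Z}$ via the degree map, with $G_k$ acting trivially: indeed $\sigma^{*}$ preserves the degree for all $\sigma\in G_k$, and the degree already identifies $\operatorname{Pic}(\overline{B})$ with $\mathbb{Z}$. Using also $H^1(B,\mathbb{G}_m)=\operatorname{Pic}(B)$ and $H^2(G_k,\overline{k}^*)=\operatorname{Br}(k)$ by definition, the standard five-term low-degree exact sequence
$$0\to E_2^{1,0}\to H^1(B,\mathbb{G}_m)\to E_2^{0,1}\xrightarrow{\,d_2\,} E_2^{2,0}\to H^2(B,\mathbb{G}_m)$$
collapses to
$$0\to\operatorname{Pic}(B)\to\operatorname{Pic}(\overline{B})\xrightarrow{\,\delta\,}\operatorname{Br}(k),\qquad\delta:=d_2,$$
which is exactly the stated sequence.

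The remaining and main step is to verify $\delta(1)=[B]$ under the bijection of Corollary~\ref{cor-Jah}. By that Corollary $[B]$ is the image of the defining Galois cocycle under the non-abelian boundary map $H^1(k,\operatorname{PGL}_{n+1})\to H^2(k,\mathbb{G}_m)$ coming from the short exact sequence
$$1\to\mathbb{G}_m\to\operatorname{GL}_{n+1}\to\operatorname{PGL}_{n+1}\to 1.$$
On the other hand, $d_2(1)$ measures the obstruction to endowing the hyperplane bundle $\mathcal{O}(1)$ on $\overline{B}$ with a $G_k$-equivariant structure compatible with the twisted Galois action, i.e.\ to descending $\mathcal{O}(1)$ to a line bundle on $B$. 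Fixing a trivialization $\overline{B}\cong\mathbb{P}^n_{\overline{k}}$ given by the cocycle $\xi$ and choosing set-theoretic lifts $\widetilde{\xi}(\sigma)\in\operatorname{GL}_{n+1}(\overline{k})$ of $\xi(\sigma)$, the failure of $\widetilde{\xi}$ to be a $1$-cocycle is precisely a $2$-cocycle in $\overline{k}^*$, and a direct chase shows it represents $\delta(1)$ up to a coboundary.

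I expect this last identification, namely the compatibility of the spectral-sequence differential $d_2$ with the $\operatorname{GL}/\operatorname{PGL}$-boundary, to be the delicate point of the argument; it can be handled either by the naturality of the two connecting homomorphisms fitted into a single commutative diagram, or by an explicit \v{C}ech cocycle computation using the trivialization of $\overline{B}$ above. The remaining steps are standard spectral-sequence bookkeeping and Hilbert~90.
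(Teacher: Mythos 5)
The paper gives no proof of this statement at all: it is quoted from Lichtenbaum via \cite[Theorem 5.4.10]{GS} and used as a black box, so there is no internal argument to compare yours against. Judged on its own, your route through the Hochschild--Serre spectral sequence is the standard modern proof and the bookkeeping is correct: $E_2^{1,0}=H^1(G_k,\overline{k}^*)=0$ by Hilbert 90 (note this step silently uses $H^0(\overline{B},\mathbb{G}_m)=\overline{k}^*$, i.e.\ that $B$ is proper and geometrically integral), $H^1(B,\mathbb{G}_m)=\operatorname{Pic}(B)$ by Grothendieck's Hilbert 90, and the Galois action on $\operatorname{Pic}(\overline{B})\cong\mathbb{Z}$ is trivial because it preserves degree, so the five-term sequence collapses to the stated one. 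For what it is worth, the cited source obtains the same sequence more elementarily, by splicing the Galois-cohomology long exact sequences of $1\to\overline{k}^*\to\overline{k}(B)^*\to\overline{k}(B)^*/\overline{k}^*\to 1$ and $0\to\overline{k}(B)^*/\overline{k}^*\to\operatorname{Div}(\overline{B})\to\operatorname{Pic}(\overline{B})\to 0$, using that $\operatorname{Div}(\overline{B})$ is a permutation module; your spectral sequence packages exactly this computation, and additionally hands you the continuation $\operatorname{Br}(k)\to\operatorname{Br}(B)$ for free.

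The one genuine soft spot is the identification $\delta(1)=[B]$, which is the actual content of the theorem and which you assert and motivate rather than prove. Your sketch is the right one: lift the defining $\operatorname{PGL}_{n+1}$-cocycle $\xi$ to set-theoretic matrices $\widetilde{\xi}(\sigma)\in\operatorname{GL}_{n+1}(\overline{k})$, and observe that the resulting $2$-cocycle with values in $\overline{k}^*$ represents both the $\operatorname{GL}/\operatorname{PGL}$ boundary of $[\xi]$ (hence the Brauer class of $B$ under the dictionary behind Corollary \ref{cor-Jah}) and the descent obstruction for $\mathcal{O}(1)$. But the second of these two equalities is precisely the compatibility you defer; making it honest requires identifying $\widetilde{\xi}(\sigma)$ with the induced semilinear maps on $H^0(\mathbb{P}^n_{\overline{k}},\mathcal{O}(1))$ and checking that the \v{C}ech/cocycle class so obtained is the image of $1\in\operatorname{Pic}(\overline{B})$ under $d_2$ (not, say, its negative --- there is a genuine sign/opposite-algebra ambiguity depending on the convention $\xi(\tau)=\Phi\circ{}^{\tau}\Phi^{-1}$, which the phrase ``the Brauer class corresponding to $B$'' quietly absorbs but a complete write-up should pin down). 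So: correct strategy, standard and if anything more general than the cited proof, but the theorem's key assertion is still carried by a sketched compatibility rather than a completed argument.
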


\begin{thm}\label{prop-generator} Let $B$ be a non-trivial Brauer-Severi surface over $k$, associated to a cyclic algebra $(L/k,\chi,a)$
of dimension $9$ by Theorem \ref{thm-cyclic}. For any integer
$d'\geq 1$, there is a twist $C'$ over $k$ of the smooth plane curve
$C_{d',a}$, living inside $B$ and also defines a generator of
$d'\operatorname{Pic}(B)$.
\end{thm}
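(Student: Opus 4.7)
The plan is to produce $C'$ by twisting the diagonal curve $C_{d',a}$ of Lemma \ref{lemma-curves} by a cocycle whose image in $\operatorname{H}^1(k,\operatorname{PGL}_3(\overline{k}))$ is precisely the class of $B$, and then read off the degree of the resulting $k$-embedded curve via Lichtenbaum's sequence.

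First I would use Theorem \ref{imprtiantexactsequ} to reduce the problem to a degree computation. Since $B$ is non-trivial and corresponds to a cyclic algebra of dimension $9$, its Brauer class $[B]\in\operatorname{Br}(k)$ has period $3$. The exact sequence identifies $\operatorname{Pic}(B)$ with $\ker(\delta)=3\mathbb{Z}$ inside $\operatorname{Pic}(\overline{B})=\mathbb{Z}$; hence $d'\operatorname{Pic}(B)$ corresponds to $3d'\mathbb{Z}$, and it suffices to exhibit a $k$-rational divisor class on $B$ whose geometric degree in $\overline{B}\simeq \mathbb{P}^2_{\overline{k}}$ equals $3d'$.

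Next I would construct the cocycle. By Lemma \ref{lemma-curves}, $C_{d',a}\colon X^{3d'}+a^{d'}Y^{3d'}+a^{2d'}Z^{3d'}=0$ is a smooth plane curve of degree $3d'$ admitting the order-$3$ automorphism $\phi=[aZ:X:Y]$. Defining $\xi_{d'}\in Z^1(\operatorname{Gal}(L/k),\operatorname{Aut}(C_{d',a}))$ by $\sigma\mapsto \phi$ makes sense because $\operatorname{Gal}(L/k)$ is cyclic of order $3$. Let $C'=(C_{d',a})^{\xi_{d'}}$ be the corresponding twist. The point is now to apply the map $\Sigma$ of Theorem \ref{thm-Sigma}: it is induced by the plane embedding $\operatorname{Aut}(C_{d',a})\hookrightarrow \operatorname{PGL}_3(\overline{k})$, and under this inclusion $\phi$ is represented by the matrix $\left(\begin{smallmatrix}0&0&a\\1&0&0\\0&1&0\end{smallmatrix}\right)$, which is exactly the cocycle value $\xi(\sigma)$ appearing in Theorem \ref{thm-cyclic}. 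Therefore $\Sigma([\xi_{d'}])=[B]$ in $\operatorname{H}^1(k,\operatorname{PGL}_3(\overline{k}))$.

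By Remark \ref{rem-Sigma} (equivalently Theorem \ref{thm-XR}) this means there is a $k$-morphism $f\colon C'\hookrightarrow B$ whose base change to $\overline{k}$ is the canonical plane embedding $C_{d',a}\hookrightarrow \mathbb{P}^2_{\overline{k}}$ of degree $3d'$. Thus $[f(C')]\in\operatorname{Pic}(B)$ has degree $3d'$ in $\operatorname{Pic}(\overline{B})=\mathbb{Z}$, and by the first paragraph it generates $d'\operatorname{Pic}(B)$. The only non-routine step is the compatibility check in the previous paragraph — i.e.\ that the cocycle produced from $\phi\in\operatorname{Aut}(C_{d',a})$ really maps to the prescribed cocycle for $B$ under $\Sigma$ — and it is precisely the diagonal shape of the curves $C_{d',a}$ that was engineered in Lemma \ref{lemma-curves} to make this matching hold on the nose.
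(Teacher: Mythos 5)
Your argument for $d'\geq 2$ is essentially the paper's own: twist $C^{d'}_{a}$ by the cocycle $\sigma\mapsto[aZ:X:Y]$, observe that $\Sigma$ sends its class to the prescribed cocycle of $B$ from Theorem \ref{thm-cyclic} (via Theorem \ref{thm-Sigma} and Remark \ref{rem-Sigma}), and then compare the geometric degree $3d'$ of the embedded twist with $\operatorname{Pic}(B)\cong 3\mathbb{Z}$, which follows from Lichtenbaum's sequence once one knows $[B]$ has exact order $3$. That part is correct.

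The genuine gap is the case $d'=1$, which your proof does not cover. Both Theorem \ref{thm-XR} and Theorem \ref{thm-Sigma} are stated only for smooth plane curves of degree $d\geq 4$: the entire mechanism rests on the uniqueness of the $g^2_d$-linear system, which is what produces the canonical $\operatorname{Gal}(\overline{k}/k)$-equivariant inclusion $\operatorname{Aut}(\overline{C})\hookrightarrow\operatorname{PGL}_3(\overline{k})$ underlying the map $\Sigma$. For $d'=1$ the curve $C^{1}_{a}$ is a plane cubic, i.e.\ a genus $1$ curve: its geometric automorphism group is infinite (it contains all translations after a choice of base point over $\overline{k}$) and does not embed into $\operatorname{PGL}_3(\overline{k})$, and the $g^2_3$ is not unique, so ``apply $\Sigma$'' is simply not available. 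This is not a marginal case --- it is exactly the one that yields an honest generator of $\operatorname{Pic}(B)$ rather than of a proper subgroup $d'\operatorname{Pic}(B)$ with $d'\geq 2$. The paper devotes most of its proof to it: one restricts to the subgroup $\operatorname{Aut}_{L}(C^{1}_{a})$ of automorphisms acting linearly on $X,Y,Z$ (which does sit inside both $\operatorname{Aut}(\overline{C^{1}_{a}})$ and $\operatorname{PGL}_3(\overline{k})$, giving substitutes for $\Sigma$ and for the map to $\operatorname{H}^1(k,\operatorname{Aut}(\overline{C^{1}_{a}}))$), and then uses the $3$-Veronese embedding $\mathbb{P}^2_k\hookrightarrow\mathbb{P}^9_k$ together with the triviality of $\operatorname{H}^1(k,\operatorname{GL}_{10}(\overline{k}))$ to realize a model of $B$ in $\mathbb{P}^9_k$ containing the twisted cubic. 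You need an argument of this kind (or some other replacement for Theorems \ref{thm-XR} and \ref{thm-Sigma}) to close the $d'=1$ case.
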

\begin{proof}
The twist $C'$ of $C^{d'}_{a}$ given by the inflation map of the
cocycle
$$
\xi(\sigma)=
         \begin{pmatrix}
           0 & 0 & a \\
           1 & 0 & 0 \\
           0 & 1 & 0
           \end{pmatrix}
\in
H^1(\operatorname{Gal}(L/k),\operatorname{Aut}(\overline{C^{d'}_{a}}))$$
as in Theorem \ref{thm-cyclic} lives inside $B$ for any integer
$d'\geq2$ by using Theorem \ref{thm-Sigma} with Remark
\ref{rem-Sigma}. {For $d'=1$, set
$\operatorname{Aut_{\operatorname{L}}}(C^{1}_{a})$ for the subgroup
of automorphisms of $C^{1}_{a}$ acting linearly on the variables $X,Y,Z$. Therefore, the inclusions
$\operatorname{Aut_{L}}(C^{1}_{a})\leq\operatorname{PGL}_3(\overline{k})$
and $\operatorname{Aut_{L}}(C^{1}_{a})\leq
\operatorname{Aut}(\overline{C})$ give us the two natural maps
$\operatorname{\operatorname{L}}:\operatorname{H}^1(k,\operatorname{Aut_{\operatorname{L}}}(C^{1}_{a}))
\rightarrow\operatorname{H}^1(k,\operatorname{Aut}(\overline{C^{1}_{a}}))$,
and
$\Sigma:\operatorname{H}^1(k,\operatorname{Aut_{\operatorname{L}}}(C^{1}_{a}))\rightarrow\operatorname{H}^1(k,\operatorname{PGL}_3(\overline{k}))$
respectively. Second, compose with the $3$-Vernoese embedding
$\operatorname{Ver}_3:\mathbb{P}^2_k\rightarrow\mathbb{P}^9_k$, to
obtain a model of $C$ inside the trivial Brauer-Severi surface
$\operatorname{Ver}_3(\mathbb{P}^2_k)$. Because the image of any 1-cocycle by the
map
$\tilde{\operatorname{Ver}}_3:\operatorname{H}^1(k,\operatorname{PGL}_3(\overline{k}))\rightarrow
\operatorname{H}^1(k,\operatorname{PGL}_{10}(\overline{k}))$, is 
equivalent to a 1-cocycle with values in
$\operatorname{GL}_{10}(\overline{k})$ \cite{Lo2}, and
$\operatorname{H}^1(k,\operatorname{GL}_{10}(\overline{k}))=1$, then
$\tilde{\operatorname{Ver}}_3([B])$ is given in
$\operatorname{H}^1(k,\operatorname{PGL}_{10}(\overline{k}))$ by
$\tau\in\operatorname{Gal}(\overline{k}/k)\mapsto M\circ\,^
{\tau}M^{-1}$, for some $M\in\operatorname{GL}_{10}(\overline{k})$.
Consequently, $(M\circ\operatorname{Ver}_3)(\mathbb{P}^2_k)$ is a
model of $B$ in $\mathbb{P}^9_k$, containing
$(M\circ\operatorname{Ver}_3)(C^{1}_{a})$ inside, which is a twist of
$C^{1}_{a}$ over $k$ associated to $\xi$ by Theorem \ref{thm-cyclic}.}

On the other hand, by Wedderburn \cite{Wed} and Theorem
\ref{imprtiantexactsequ}, the map $\delta$ sends $1$ to the Brauer
class $[B]$ of $B$ inside the 3-torsion $Br(k)[3]$ of the Brauer
group $Br(k)$ of the field $k$. Hence $[B]$ has exact order $3$,
being non-trivial, and so $\operatorname{Pic}(B)$ inside
$\operatorname{Pic}(B\otimes_k\overline{k}=\mathbb{P}^2_{\overline{k}}){\stackrel{\operatorname{deg}}{\cong}}\Z$
is isomorphic to $3\mathbb{Z}$. Moreover,
$C'\times_k\overline{k}\subseteq\mathbb{P}^2_{\overline{k}}$ has
degree $3d'$, hence it corresponds to the ideal $(3d')\subset \Z$
via the degree map. Consequently, the image of $C'$ in
$\operatorname{Pic}(B)$ is a generator of $d'\operatorname{Pic}(B)$.

\end{proof}

\section{The proof of Theorem \ref{thm-main}}

Let $B$ be the Brauer-Severi surface corresponding to $(L/k,\chi,a)$ as in Theorem \ref{thm-cyclic}. Then there is an isomorphism $\overline{\phi}:\,B\rightarrow\mathbb{P}^2$ defined over $k$ such that
$$
\xi(\sigma)=
         \begin{pmatrix}
           0 & 0 & a \\
           1 & 0 & 0 \\
           0 & 1 & 0
           \end{pmatrix}=\overline{\phi}\,.^{\sigma}\overline{\phi}^{-1},
$$
for $\sigma$ a generator of $\operatorname{Gal}(L/k)$ with
$\chi(\sigma)=-1$. The results in \cite{Lo2} apply to get the
equations in the statement of Theorem \ref{thm-main} for $B$ inside
$\mathbb{P}^9$. We recall that the equation are getting by twisting
the image of $\mathbb{P}^2$ into $\mathbb{P}^9$ by the Veronese
embedding
${\operatorname{Ver}_3}:\,\mathbb{P}^2\rightarrow\mathbb{P}^9$.
Indeed, we can compute following \cite{Lo2} \small
$$
{\operatorname{Ver}_3(\overline{\phi})}= \begin{pmatrix}
    a^2 l_1 & 0 & 0 & 0 & 0 & 0 & a^2 l_2 & 0 & 0 & a^2 l_3 \\
    0 & a l_1 & 0 & 0 & 0 & a l_2 & 0 & a l_3 & 0 & 0 \\
    0 & 0 & a l_1 & a l_2 & 0 & 0 & 0 & 0 & a l_3 & 0 \\
    0 & 0 & a l_2 & a l_3 & 0 & 0 & 0 & 0 & a l_1 & 0 \\
    0 & 0 & 0 & 0 & 1 & 0 & 0 & 0 & 0 & 0 \\
    0 & l_3 & 0 & 0 & 0 & l_1 & 0 & l_2 & 0 & 0 \\
    a l_2 & 0 & 0 & 0 & 0 & 0 & a l_3 & 0 & 0 & a l_1 \\
    0 & l_2 & 0 & 0 & 0 & l_3 & 0 & l_1 & 0 & 0 \\
    0 & 0 & l_3 & l_1 & 0 & 0 & 0 & 0 & l_2 & 0 \\
    l_3 & 0 & 0 & 0 & 0 & 0 & l_1 & 0 & 0 & l_2 \\
    \end{pmatrix}:\,B\rightarrow\mathbb{P}^2\subseteq\mathbb{P}^9,
$$
\normalsize
where $L=k(l_1,l_2,l_3)$ with
$\sigma(l_1)=l_2$ and $\sigma(l_2)=l_3$.

On the other hand, the twist $\phi:\,C'\rightarrow C^{d'}_{a}$ given
by the previous cocycle is embedded in $B$: we have the $k$-morphism
$f:\,C'\rightarrow B$ given by $\overline{\phi}^{-1}\Upsilon\phi$.
Composing with ${\operatorname{Ver}_3}$ we get the equations of $C'$
inside $\mathbb{P}^9$ in the statement of Theorem \ref{thm-main}.

Finally, the claim about the order of the curves $C'$ in
$\operatorname{Pic}(B)$ follows by Theorem \ref{prop-generator}.

\section{Generalizations on Picard group elements for {cyclic} Brauer-Severi varieties}
Let $L/k$ be a Galois cyclic extension of degree $n+1$, with
$\text{Gal}(L/k)=\langle\sigma\rangle$. Fix a character
$$\chi:\text{Gal}(L/k)\rightarrow\Z/(n+1)\Z,$$ which is equivalent to fix $\sigma'$ a generator of $Gal(L/k)$ such that $\chi(\sigma')=1$.
Given $a\in k^*$, we consider a $k$-algebra $(L/k,\chi,a)$ as
follows: As an additive group, $(L/k,\chi,a)$ is an
$n+1$-dimensional vector space over $L$ with basis $1,e,\ldots,e^n$:
$(L/k,\chi,a):=\oplus_{i=0}^{n}Le^i$ with $1=e^0$. Multiplication is
given by the relations: $e\,.\,\lambda=\sigma'(\lambda)\,.\,e$ for
$\lambda\in L$, and $e^{n+1}=a$. The algebra $(L/k,\chi,a)$ is
called the \emph{cyclic algebra} associated to the character $\chi$
and the element $a\in k$, and is trivial if and only if $a$ is a
norm of certain element of $L$. Its class in
$\operatorname{H}^1(k,\operatorname{PGL}_{n+1}(\overline{k}))$ corresponds to the
inflation of the cocycle in
$\operatorname{H}^1(\operatorname{Gal}(L/k),\operatorname{PGL}_{n+1}(L))$ given by
$$\xi(\sigma)=\left(\begin{array}{cccccc}
0&0&\ldots&\ldots&0&a\\
1&0&\ddots&\ddots&0&0\\
0&1&0&\ddots&\vdots&\vdots\\
\vdots&\ddots&\ddots&\ddots&&\vdots\\
0&0&\ldots&\ldots&1&0\\
\end{array} \right):=A_{\sigma}$$


\begin{lem}\label{lemma-curves2} For any $a\in k^*$ and $d'\in\mathbb{Z}_{\geq 1}$, the
equation
$$\sum_{i=0}^{n} a^{i d'} X_i^{(n+1)d'}=0$$
defines {a non-singular $k$-projective model $X^{d',n}_a$ of degree
$(n+1)d'$ of a smooth projective variety inside $\mathbb{P}^{n}_k$},
such that $A_a:=[aX_n:X_0:\ldots:X_{n-1}]$ is leaving invariant
$X^{d',n}_a$.
\end{lem}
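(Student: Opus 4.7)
The plan is to verify the two claims of the lemma separately, both by direct inspection of the defining polynomial
$$
F(X_0,\ldots,X_n):=\sum_{i=0}^{n} a^{id'}\,X_i^{(n+1)d'}.
$$
Since $a\in k^*$, every coefficient of this diagonal Fermat-type form is nonzero, so both the Jacobian test for smoothness and the substitution check for $A_a$-invariance reduce to a routine calculation with the cyclic shift of coefficients.

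For smoothness, I would compute
$\partial F/\partial X_j=(n+1)d'\,a^{jd'}\,X_j^{(n+1)d'-1}$ for every $j=0,\ldots,n$. Under the natural coprimality hypothesis $\operatorname{char}(k)\nmid(n+1)d'$, the simultaneous vanishing of all of these partial derivatives forces $X_j=0$ for each $j$, which is impossible in $\mathbb{P}^n$. The form is manifestly homogeneous of degree $(n+1)d'$, so $X^{d',n}_a:=V(F)\subset\mathbb{P}^{n}_k$ is a smooth hypersurface of the asserted degree.

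For $A_a$-invariance it suffices to show that $F\circ A_a$ is a nonzero scalar multiple of $F$. Pulling back $F$ by $A_a=[aX_n:X_0:\cdots:X_{n-1}]$ gives
$$
F(aX_n,X_0,\ldots,X_{n-1})=a^{(n+1)d'}X_n^{(n+1)d'}+\sum_{i=1}^{n} a^{id'}\,X_{i-1}^{(n+1)d'}.
$$
Reindexing by $j=i-1$ in the sum, the coefficient of $X_j^{(n+1)d'}$ becomes $a^{(j+1)d'}$ for $0\le j\le n-1$ and $a^{(n+1)d'}$ for $j=n$; in every case this is $a^{d'}$ times the coefficient $a^{jd'}$ of $X_j^{(n+1)d'}$ appearing in $F$. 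Hence $F\circ A_a=a^{d'}\cdot F$, and $A_a$ leaves the zero locus $X^{d',n}_a$ invariant.

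There is essentially no substantive obstacle here; the only thing one has to notice is that the prefactors $a^{id'}$ in front of $X_i^{(n+1)d'}$ have been engineered precisely so that the cyclic shift $X_i\mapsto X_{i-1}$, combined with the twisted wrap-around $X_0\mapsto aX_n$, rescales $F$ by a single scalar $a^{d'}$ rather than producing a genuinely different polynomial. This is also the feature that will allow $A_a$ to play the role of a cyclic cocycle in $\operatorname{H}^1(\operatorname{Gal}(L/k),\operatorname{Aut}(\overline{X^{d',n}_a}))$ in the anticipated generalisation of Lemma \ref{lemma-curves} to higher-dimensional cyclic Brauer-Severi varieties, in the spirit of Theorem \ref{prop-generator}.
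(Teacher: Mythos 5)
Your proof is correct; the paper states Lemma \ref{lemma-curves2} with no proof at all, and your direct verification --- the Jacobian criterion for smoothness plus the substitution computation showing $F\circ A_a=a^{d'}F$ --- is exactly the routine argument the authors are taking for granted. The one substantive point you add is the hypothesis $\operatorname{char}(k)\nmid (n+1)d'$, which is genuinely needed for the Jacobian criterion (otherwise every partial derivative vanishes identically and the hypersurface as written is non-reduced) and is left implicit in the paper's statement.
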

\begin{thm}Let $B$ be a Brauer-Severi variety over $k$, associated to a cyclic algebra
$(L/k,\chi,a)$ of dimension
${(n+1)^2}$ and exact order $n+1$ in $\operatorname{Br}(k)$.
For
any integer $d'\geq 1$, there is a twist $X'$ over $k$ of
$X^{d',n}_a$, living inside $B$ and defining a generator of $d'\operatorname{Pic}(B)$.
\end{thm}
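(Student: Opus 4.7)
The plan is to extend the proof of Theorem \ref{prop-generator} from Brauer-Severi surfaces ($n=2$) to the general case, following the same three-step pattern: define the twist, place it inside $B$, and identify its class in $\operatorname{Pic}(B)$ via Lichtenbaum.

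First, I observe that the matrix $A_\sigma$ representing $[B]$ in $\operatorname{H}^1(\operatorname{Gal}(L/k),\operatorname{PGL}_{n+1}(L))$ is, by direct inspection, the same matrix as the automorphism $A_a=[aX_n:X_0:\ldots:X_{n-1}]$ of $X^{d',n}_a$ supplied by Lemma \ref{lemma-curves2}. Thus $\xi(\sigma)$ can simultaneously be regarded as a $1$-cocycle with values in $\operatorname{Aut}_L(X^{d',n}_a)\subseteq \operatorname{PGL}_{n+1}(L)$, and inflating it yields a twist $X'$ of $X^{d',n}_a$ defined over $k$.

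Second, I would show that $X'$ lives inside $B$ over $k$. For $d'\geq 2$, the hypersurface has degree $(n+1)d'\geq 2(n+1)$, well above the Matsumura-Monsky-type threshold for smooth hypersurfaces; the unique polarization $\mathcal{O}_X(1)$ then induces a $\operatorname{Gal}(\overline{k}/k)$-equivariant inclusion $\operatorname{Aut}(\overline{X^{d',n}_a})\hookrightarrow \operatorname{PGL}_{n+1}(\overline{k})$, giving the higher-dimensional analogue of Theorem \ref{thm-Sigma} together with Remark \ref{rem-Sigma}. Applying them, the twist $X'$ naturally sits inside the Brauer-Severi variety attached to the image cocycle, which is precisely $B$. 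For the residual case $d'=1$, where $X^{1,n}_a$ has degree $n+1$ and is a Calabi-Yau hypersurface for which the embedding need not be rigid, I would imitate the Veronese detour from the proof of Theorem \ref{prop-generator}: compose with $\operatorname{Ver}_{n+1}:\mathbb{P}^n_k\to \mathbb{P}^N_k$ with $N=\binom{2n+1}{n+1}-1$, twist the image by $\xi$ inside $\operatorname{PGL}_{N+1}$, and apply Hilbert 90 ($\operatorname{H}^1(k,\operatorname{GL}_{N+1}(\overline{k}))=1$) to descend the twisted ambient space to $\mathbb{P}^N_k$, recovering a $k$-model of $B$ containing the corresponding twist of $X^{1,n}_a$.

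Third, I would identify the class of $X'$ in $\operatorname{Pic}(B)$ via Lichtenbaum's exact sequence (Theorem \ref{imprtiantexactsequ}). Since $[B]$ has exact order $n+1$ in $\operatorname{Br}(k)$, the inclusion $\operatorname{Pic}(B)\hookrightarrow \operatorname{Pic}(B\otimes_k\overline{k})\cong \mathbb{Z}$ has image $(n+1)\mathbb{Z}$. Since $X'\times_k\overline{k}\cong X^{d',n}_a$ has degree $(n+1)d'$ as a hypersurface in $\mathbb{P}^n_{\overline{k}}$, its class maps to $(n+1)d'\in \mathbb{Z}$, which is exactly the generator of $d'(n+1)\mathbb{Z}=d'\operatorname{Pic}(B)$. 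The main obstacle I foresee is the higher-dimensional analogue of Theorem \ref{thm-Sigma} invoked in the $d'\geq 2$ case: one must verify that the Galois action commutes with the canonical embedding coming from $\mathcal{O}_X(1)$, so that a twist of $X^{d',n}_a$ by a cocycle living in $\operatorname{Aut}(\overline{X^{d',n}_a})$ automatically lives in the corresponding twist of $\mathbb{P}^n$, namely $B$. The case $d'=1$ goes through the Veronese and requires careful bookkeeping of the cocycle through the Veronese representation of $\operatorname{PGL}_{n+1}$, but once this descent is secured by Hilbert 90 the rest is parallel to the surface case.
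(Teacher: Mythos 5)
Your proposal is correct in outline and its engine is the same as the paper's: realize the cocycle $\xi(\sigma)=A_\sigma$ as an automorphism of $X^{d',n}_a$ (Lemma \ref{lemma-curves2}), transport everything through the degree-$(n+1)$ Veronese embedding so that Hilbert 90 trivializes the ambient cocycle and produces a $k$-model of $B$ containing the twist $X'$, and then read off the class of $X'$ in $\operatorname{Pic}(B)$ from Lichtenbaum's sequence via the degree $(n+1)d'$ against $\operatorname{Pic}(B)\cong(n+1)\mathbb{Z}$. The one place where you diverge is the case $d'\geq 2$: you try to transplant the surface argument of Theorem \ref{prop-generator} verbatim, invoking a higher-dimensional analogue of Theorem \ref{thm-Sigma} and Remark \ref{rem-Sigma} justified by a Matsumura--Monsky-type linearity statement for smooth hypersurfaces of degree $(n+1)d'\geq 2(n+1)$. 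The paper does not do this; it runs the Veronese/Hilbert-90 detour uniformly for \emph{all} $d'\geq 1$, which makes the case split unnecessary and avoids having to formulate and prove the analogue of Theorem \ref{thm-Sigma} for hypersurfaces (equivariance of the embedding induced by $\mathcal{O}_X(1)$, Galois-invariance of that polarization, exclusion of the low-degree exceptional cases of Matsumura--Monsky, etc.). Your branch for $d'\geq 2$ is plausible but is the only part of your argument that is not actually carried out, and as you yourself flag it is the main obstacle; you could simply delete it and let your $d'=1$ argument (which matches the paper's proof) cover every $d'$, since nothing in that argument uses $d'=1$.
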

\begin{proof}
Set
$m=\binom{2n+1}{n}$ and consider the Veronese embedding 
$\operatorname{Ver}_n:\,\mathbb{P}^n_k\hookrightarrow\mathbb{P}^{m-1}_k$. Use the $n$-Veronese embedding
$\operatorname{Ver}_n$ to
obtain a model of $X^{d',n}_a$ inside the trivial Brauer-Severi
variety $\operatorname{Ver}_n(\mathbb{P}^n_k)$. Because the image of a 1-cocyle
by the map
$\tilde{\operatorname{Ver}}_n:\operatorname{H}^1(k,\operatorname{PGL}_{n+1}(\overline{k}))\rightarrow
\operatorname{H}^1(k,\operatorname{PGL}_{m+1}(\overline{k}))$ is
equivalent to a 1-cocycle with coefficients in
$\operatorname{GL}_{m+1}(\overline{k})$ by \cite{Lo2} and
$\operatorname{H}^1(k,\operatorname{GL}_{m+1}(\overline{k}))=1$,
then $\tilde{\operatorname{Ver}}_n([B])$ is given in
$\operatorname{H}^1(k,\operatorname{PGL}_{m+1}(\overline{k}))$ by
$\tau\mapsto M\circ\,^
{\tau}M^{-1}$, for some $M\in\operatorname{GL}_{m+1}(\overline{k})$.
Consequently, $(M\circ\operatorname{Ver}_n)(\mathbb{P}^2_k)$ is a
model of $B$ in $\mathbb{P}^{m-1}_k$, containing
$(M\circ\operatorname{Ver}_n)(X^{d',n}_a)$ inside, which is a twist
of $X^{d',n}_a$ over $k$ associated to $\xi:\sigma\mapsto A_{\sigma}$.

On the other hand, by Theorem \ref{imprtiantexactsequ}, the map
$\delta$ sends $1$ to the Brauer class $[B]$ of $B$ inside the
$(n+1)$-torsion $\operatorname{Br}(k)[n+1]$ of the Brauer group
$\operatorname{Br}(k)$ of the field $k$. Hence $[B]$ has exact order
 $n+1$, being non-trivial, and so $\operatorname{Pic}(B)$ inside
$\operatorname{Pic}(B\otimes_k\overline{k}=\mathbb{P}^n_{\overline{k}}){\stackrel{\operatorname{deg}}{\cong}}\Z$
is isomorphic to $(n+1)\mathbb{Z}$. Moreover,
$X'\times_k\overline{k}\subseteq\mathbb{P}^n_{\overline{k}}$ has
degree $(n+1)d'$, hence it corresponds to the ideal
$((n+1)d')\subset \Z$ via the degree map. Consequently, the image of
$X'$ in $\operatorname{Pic}(B)$ is a generator of
$d'\operatorname{Pic}(B)$.

\end{proof}

Following the notation of \cite[Lemma 3.1]{Lo2}, we write $V_n:\,\mathbb{P}^n\rightarrow\mathbb{P}^m:\,(X_0:...:X_n)\mapsto(\omega_0:...:\omega_m)$, where the $\omega_k$ are equal to the products $\omega_{X_{0}^{\alpha_0}\dots X_{n}^{\alpha_n}}=\prod_{i}X_{i}^{\alpha_i}$ with $\sum_{i}\alpha_i=n+1$ in alphabetical order.

The automorphism $A_a$ of $X^{d',n}_a$ as an automorphism of $\operatorname{Ver}_n(X^{d',n}_a)$ sends $\omega_{X_{i}^n}\mapsto\omega_{X_{i+1}^n}$ and $\omega_{X_{n}^n}\mapsto a\omega_{X_{1}^n}$. 

\begin{cor} With notation above,
	$$
	\operatorname{Ver}_n(X'):\,\sum_{i=0}^{n}(\sum_j l_{i+j}\omega_{X_j^n})^{d'}=0\subseteq B
	$$
\end{cor}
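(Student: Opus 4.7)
The plan is to pull back the defining equation of $X^{d',n}_a$ along the twisted embedding $X^{d',n}_a\hookrightarrow B\subset\mathbb{P}^{m-1}$ constructed in the previous theorem, and then simplify. By that construction, there is a $\overline{k}$-isomorphism $\overline{\phi}:B\to\mathbb{P}^n$ satisfying the cocycle identity $\overline{\phi}\cdot{}^\sigma\overline{\phi}^{-1}=A_\sigma$, and hence the implicit equations of the twist $X'\subset B\subset\mathbb{P}^{m-1}$ are obtained by expressing, via $\operatorname{Ver}_n(\overline{\phi})$, the Veronese coordinates of $\mathbb{P}^n$, and in particular the pure powers $X_i^{n+1}$, as linear forms in the ambient $\omega$-coordinates.

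Mimicking the explicit matrix computation of Section~4 for the case $n=2$, a descent matrix $\overline{\phi}$ built from the non-zero trace normal basis $\{l_1,\ldots,l_{n+1}\}$ has pure-power rows that express
$$X_i^{n+1}\;=\;a^{\,n-i}\,L_{i+1},\qquad L_i:=\sum_{j=0}^{n}l_{i+j}\,\omega_{X_j^{n+1}}\quad(\text{indices taken mod }n+1),$$
for $i=0,\ldots,n$. The row-dependent factor $a^{\,n-i}$ is forced by the single $a$ appearing in $A_\sigma$, and the cyclic shift of the $l_j$'s along the rows is forced by $\sigma(l_j)=l_{j+1}$; this is precisely the pattern that can be read off in the rows for $\omega_0,\omega_6,\omega_9$ of the $10\times10$ matrix $\operatorname{Ver}_3(\overline{\phi})$ displayed in Section~4. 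Substituting these expressions into the defining equation $\sum_{i=0}^n a^{id'}(X_i^{n+1})^{d'}=0$ of $X^{d',n}_a$ yields
$$\sum_{i=0}^n a^{id'}\bigl(a^{\,n-i}L_{i+1}\bigr)^{d'}=a^{\,nd'}\sum_{i=0}^n L_{i+1}^{d'}=a^{\,nd'}\sum_{i=0}^n L_i^{d'},$$
where the last equality is the cyclic relabelling $i+1\mapsto i$. Dividing through by the nonzero scalar $a^{\,nd'}$ gives $\sum_{i=0}^n L_i^{d'}=0$, which is precisely the formula in the statement.

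As an internal consistency check, the resulting equation is $\operatorname{Gal}(L/k)$-invariant and hence automatically $k$-defined, as it must be for $X'$: the coordinates $\omega_\bullet$ are $k$-rational and $\sigma$ cyclically permutes the $l_j$'s, so it cyclically permutes the linear forms $L_i$, leaving the symmetric polynomial $\sum_i L_i^{d'}$ fixed. The main obstacle is the first step, namely making the description of the descent matrix $\overline{\phi}$ precise for general $n$ and verifying the cocycle identity $\overline{\phi}\cdot{}^\sigma\overline{\phi}^{-1}=A_\sigma$; this is the direct generalisation of the $n=2$ computation already carried out in Section~4, and once it is in hand the rest of the argument is a one-line substitution.
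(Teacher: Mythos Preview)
Your proposal is correct and follows essentially the same route as the paper: compute the pure-power rows of the descent matrix $\operatorname{Ver}_n(\overline{\phi})$ realising the cocycle, substitute into the Fermat-type equation of $X^{d',n}_a$, and observe that the powers of $a$ factor out uniformly. The paper's proof is terser (it simply cites \cite[Section~3]{Lo} for the form of $\phi$ and says ``plug in''), whereas you spell out the cancellation $a^{id'}\cdot a^{(n-i)d'}=a^{nd'}$ and the Galois-invariance check explicitly; your formula $X_i^{n+1}=a^{\,n-i}L_{i+1}$ in fact agrees exactly with the rows $0,6,9$ of the displayed $10\times10$ matrix in Section~5.
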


\begin{proof}
	By using \cite[Section 3]{Lo}, we find that a matrix $\phi$ realizing the cocycle $\xi$, that is, $\xi=\phi\circ^{\sigma}\phi^{-1}$, sends $\phi(\omega_{X_i^n})=a^i(\sum l_{i+j}\omega_{X_j^n})$ and $\phi(\omega_{X_n^n})=(\sum l_{j-1}\omega_{X_j^n})$. We plug $\phi$ into the equation of $X^{d',n}_a$ and the result follows.
\end{proof}

\section*{Appendix: Another approach to construct Brauer-Severi
surfaces}
The third author shows an algorithm for constructing equations of Brauer-Severi varieties in \cite{Lo2}. Here we show an alternative way for constructing equations of Brauer-Severi surfaces ($n=2$) by using the Twisting Theory of plane curves.

Let $\operatorname{Ver}_{n}:\mathbb{P}^{n}_k\hookrightarrow\mathbb{P}^{\binom{2n-1}{n-1}-1}_k$ be the $n$-Veronese embedding.
It has been observed by the third author in \cite{Lo2} that the induced map
$$\tilde{\operatorname{Ver}}_{n}:\operatorname{H}^1(k,\operatorname{PGL}_{n+1}(\overline{k}))\rightarrow
\operatorname{H}^1(k,\operatorname{PGL}_{\binom{2n-1}{n-1}}(\overline{k})),$$
satisfies that the image of any 1-cocycle is equivalent to a
1-cocycle with values in the lineal group
$\operatorname{GL}_{\binom{2n-1}{n-1}}(\overline{k})$ and is
well-know that
$\operatorname{H}^1(k,\operatorname{GL}_{\binom{2n-1}{n-1}}(\overline{k}))$,
is trivial by Hilbert 90 Theorem. This fact leads to an algorithm to
compute equations for any Brauer-Severi varieties. Here we use the idea coming from the construction in \cite{BaBaEl1} of the equations for a non-trivial Brauer-Severi surface.  

\begin{lem}\label{leminc} Let $C$ be a smooth plane curve over $k$ of genus $g=\frac{1}{2}(d-1)(d-2)\geq 3$.
The canonical embedding of $C$ is isomorphic to the composition
${\Psi}:C{\stackrel{\iota}{\longrightarrow}}\mathbb{P}^2_k{\stackrel{\operatorname{Ver}_{d-3}}{\longrightarrow}}\mathbb{P}^{g-1}_k,$
where {$\iota$ comes from the (unique) $g^2_d$-linear system,} all
are defined over $k$. In particular, fixing a non-singular plane
model $F_C(X,Y,Z)=0$ in $\mathbb{P}^2_k$ of $C$, one may directly
compute its canonical embedding into $\mathbb{P}^{g-1}_k$ by
applying the morphism $\operatorname{Ver}_{d-3}$.
\end{lem}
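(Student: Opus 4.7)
The plan is to prove the lemma by directly comparing the two maps, using the adjunction formula on $\mathbb{P}^2_k$ to identify the canonical sheaf of $C$ with the restriction of $\mathcal{O}_{\mathbb{P}^2}(d-3)$, and then interpreting the Veronese map as being built from the monomial basis of $H^0(\mathbb{P}^2,\mathcal{O}(d-3))$.

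First, I would apply adjunction to the smooth plane curve $\iota:C\hookrightarrow\mathbb{P}^2_k$ of degree $d$. Since $K_{\mathbb{P}^2}\sim -3H$ and $C\sim dH$, one obtains a $k$-rational linear equivalence
\[ K_C\;=\;(K_{\mathbb{P}^2}+C)\big|_C\;\sim\;(d-3)H\big|_C, \]
so $K_C\cong \iota^\ast\mathcal{O}_{\mathbb{P}^2}(d-3)$ as line bundles defined over $k$. In particular, sections of $K_C$ are obtained from degree $d-3$ forms in $X,Y,Z$ pulled back via $\iota$.

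Next, I would verify that every section of $K_C$ arises this way. For that I would twist the ideal sequence $0\to\mathcal{O}_{\mathbb{P}^2}(-d)\to\mathcal{O}_{\mathbb{P}^2}\to\mathcal{O}_C\to 0$ by $\mathcal{O}_{\mathbb{P}^2}(d-3)$ to get
\[ 0\longrightarrow\mathcal{O}_{\mathbb{P}^2}(-3)\longrightarrow\mathcal{O}_{\mathbb{P}^2}(d-3)\longrightarrow\iota_\ast\mathcal{O}_C(d-3)\longrightarrow 0. \]
Taking cohomology over $k$ and using the standard vanishings $H^0(\mathbb{P}^2,\mathcal{O}(-3))=H^1(\mathbb{P}^2,\mathcal{O}(-3))=0$, the restriction map
\[ H^0(\mathbb{P}^2_k,\mathcal{O}(d-3))\;\longrightarrow\;H^0(C,\mathcal{O}_C(d-3))\;\cong\;H^0(C,K_C) \]
is a $k$-linear isomorphism, both spaces having dimension $\binom{d-1}{2}=g$.

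Finally, I would translate this identification into the asserted factorization. The $k$-basis of $H^0(\mathbb{P}^2_k,\mathcal{O}(d-3))$ consisting of all degree $d-3$ monomials in $X,Y,Z$ is precisely what defines the Veronese embedding $\operatorname{Ver}_{d-3}:\mathbb{P}^2_k\hookrightarrow\mathbb{P}^{g-1}_k$. Hence the composition $\Psi=\operatorname{Ver}_{d-3}\circ\iota$ is the morphism associated to a $k$-basis of $|K_C|$, which is exactly the canonical embedding of $C$ over $k$ (up to the ordering of the basis); the base-point freeness needed to ensure this is a morphism is automatic, since the monomials of degree $d-3$ cut out $\emptyset$ in $\mathbb{P}^2$. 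The main points requiring care are the vanishing $H^1(\mathbb{P}^2,\mathcal{O}(-3))=0$ ensuring surjectivity of the restriction (so that no canonical section is missed), and checking everything descends to $k$, both of which are standard, so I do not anticipate a serious obstacle.
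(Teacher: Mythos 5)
Your proposal is correct and follows essentially the same route as the paper: the paper invokes the isomorphism $\Omega^1_C\cong\mathcal{O}(d-3)|_C$ (citing Hartshorne, which is exactly your adjunction computation) and then asserts that $H^0(\mathbb{P}^2,\mathcal{O}(d-3))\to H^0(C,\Omega^1)$ is an isomorphism, which you justify in more detail via the twisted ideal sequence and the vanishing of $H^0$ and $H^1$ of $\mathcal{O}_{\mathbb{P}^2}(-3)$. Your write-up simply fills in the steps the paper leaves to the reader.
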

\begin{proof} It is fairly well-known
that the sheaves $\Omega^1(C)$ and $\mathcal{O}(d-3)|_{C}$ are
isomorphic (cf. R. Hartshorne \cite[Example 8.20.3]{Hart}). Hence,
$\operatorname{H}^0(\mathbb{P}^2,\mathcal{O}(d-3))\longrightarrow
\operatorname{H}^0(C,\Omega^1)$ is an isomorphism, and the statement
follows.
\end{proof}

Both maps, $\iota$ and $\operatorname{Ver}_{d-3},$ are
$\operatorname{Gal}(\overline{k}/k)$-equivariant. Therefore, the
natural maps
$$\operatorname{Aut}(\overline{C})\hookrightarrow
\operatorname{Aut}(\mathbb{P}^2_{\overline{k}})=\operatorname{PGL}_3(\overline{k})\rightarrow
\operatorname{Aut}(\mathbb{P}^{g-1}_{\overline{k}})=\operatorname{PGL}_{g}(\overline{k})$$
are morphisms of $\operatorname{Gal}(\overline{k}/k)$-groups.

\begin{prop}\label{prop-7.2} Given a non-trivial Brauer-Severi surface $B$ over $k$, associated to
a cyclic algebra $(L/k,\chi,a)$ of dimension 9, it is
algorithmically computable a $k$-model of $B$ in $\mathbb{P}^{\frac{9}{2}d'(d'-1)}_k$ with $d'\geq2$,
by considering any smooth plane curve $C$ over $k$ of degree $3d'$,
such that $[aZ:X:Y]$ is an automorphism. In particular, for $d'=2$ we get a model \footnote{In \cite{Lo2} the equations for Brauer-Severi surfaces are also obtained in $\mathbb{P}^9$.} in $\mathbb{P}^9$. 
\end{prop}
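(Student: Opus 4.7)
The strategy mirrors the proof of Theorem \ref{prop-generator} but replaces the $3$-Veronese by the canonical embedding of a smooth plane curve of degree $3d'\ge 6$. First, pick any smooth plane curve $C/k$ of degree $d=3d'$ admitting $[aZ:X:Y]$ as an automorphism (for instance $C^{d'}_a$ from Lemma \ref{lemma-curves}). Since $d\geq 4$, the inclusion $\operatorname{Aut}(\overline C)\hookrightarrow\operatorname{PGL}_3(\overline{k})$ is Galois-equivariant, so the inflation of the cocycle $\sigma\mapsto [aZ:X:Y]$ yields a class in $\operatorname{H}^1(k,\operatorname{Aut}(\overline C))$. By Theorem \ref{thm-cyclic} together with Theorem \ref{thm-Sigma} and Remark \ref{rem-Sigma}, the image of this class under $\Sigma$ is exactly $[B]\in\operatorname{H}^1(k,\operatorname{PGL}_3(\overline{k}))$, and $B$ is the Brauer-Severi surface which contains the corresponding twist of $C$.

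Next, by Lemma \ref{leminc}, the canonical embedding of $C$ factors as $C\stackrel{\iota}{\hookrightarrow}\mathbb{P}^2_k\stackrel{\operatorname{Ver}_{d-3}}{\longrightarrow}\mathbb{P}^{g-1}_k$ with $g=\frac{1}{2}(d-1)(d-2)=\frac{1}{2}(3d'-1)(3d'-2)$, so that $g-1=\frac{9}{2}d'(d'-1)$; in particular $g-1=9$ when $d'=2$. The induced map on non-abelian cohomology,
$$\tilde{\operatorname{Ver}}_{d-3}:\operatorname{H}^1(k,\operatorname{PGL}_3(\overline{k}))\longrightarrow\operatorname{H}^1(k,\operatorname{PGL}_{g}(\overline{k})),$$
sends $[B]$ to a class admitting, by the observation recalled from \cite{Lo2}, a representative valued in $\operatorname{GL}_g(\overline{k})$. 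Since $\operatorname{H}^1(k,\operatorname{GL}_g(\overline{k}))=1$ by Hilbert 90, this cocycle is a coboundary: there exists $M\in\operatorname{GL}_g(\overline{k})$ with $\tilde{\operatorname{Ver}}_{d-3}(\xi)(\tau)=M\circ\,^{\tau}M^{-1}$ for every $\tau\in\operatorname{Gal}(\overline{k}/k)$. Consequently, the $k$-subvariety $(M\circ\operatorname{Ver}_{d-3})(\mathbb{P}^2_k)\subset\mathbb{P}^{g-1}_k$ is the descent of the twisted Veronese image of $\mathbb{P}^2$, hence a $k$-model of $B$ inside $\mathbb{P}^{\frac{9}{2}d'(d'-1)}_k$, and for $d'=2$ this lives in $\mathbb{P}^9_k$ as asserted.

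The formal part of the argument — the identifications $\Sigma(\xi)=[B]$, the factorisation of the canonical map, the dimension count, and the triviality of the ambient $\operatorname{GL}_g$-cohomology — is immediate from the cited results. The substantive step, and the one which makes the statement algorithmic rather than merely existential, is the explicit construction of the trivialising matrix $M$. Following \cite{Lo2}, one descends the problem from $\overline{k}$ to the cyclic extension $L/k$, chooses a non-zero trace normal basis $\{l_1,l_2,l_3\}$ of $L$, writes down an ansatz for $M$ whose entries are $L$-linear combinations of the $l_i$ in the Veronese coordinates $\omega_0,\dots,\omega_{g-1}$, and imposes the single descent relation $\tilde{\operatorname{Ver}}_{d-3}(\xi)(\sigma)=M\circ\,^\sigma M^{-1}$ at the generator $\sigma$ of $\operatorname{Gal}(L/k)$; this reduces to a finite linear algebra problem over $L$ and yields $M$ explicitly, in full analogy with the $10\times 10$ matrix $\operatorname{Ver}_3(\overline{\phi})$ written out in the proof of Theorem \ref{thm-main}.
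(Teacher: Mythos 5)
Your argument is correct and follows essentially the same route as the paper: both factor the canonical embedding through $\operatorname{Ver}_{d-3}$ via Lemma \ref{leminc}, lift the relevant cocycle to $\operatorname{GL}_g(\overline{k})$, and apply Hilbert 90 to produce the trivialising matrix $M$ so that $(M\circ\operatorname{Ver}_{d-3})(\mathbb{P}^2_k)$ is the desired $k$-model in $\mathbb{P}^{g-1}_k$ with $g-1=\frac{9}{2}d'(d'-1)$. The only cosmetic difference is that the paper justifies the $\operatorname{GL}_g$-lift via the linear action of $\operatorname{Aut}(\overline{C})$ on $\operatorname{H}^0(C,\Omega^1)$ coming from the canonical model, whereas you invoke the Veronese-lifting observation from \cite{Lo2} (as in the proof of Theorem \ref{prop-generator}); both are cited in the paper's proof and lead to the same computation.
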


\begin{proof}
For non-hyperelliptic curves, see a description in \cite{Lo}, the
canonical model gives a natural
$\operatorname{Gal}(\overline{k}/k)$-inclusion
$\operatorname{Aut}(\overline{C})\hookrightarrow\operatorname{PGL}_g(\overline{k})$,
but we can go further, the action gives a
$\operatorname{Gal}(\overline{k}/k)$-inclusion
$\operatorname{Aut}(\overline{C})\hookrightarrow\operatorname{GL}_g(\overline{k})$.
In this way, the natural map
$\operatorname{H}^1(k,\operatorname{Aut}(\overline{C}))\rightarrow
\operatorname{H}^1(k,\operatorname{PGL}_{g}(\overline{k}))$,
satisfies that that the image of any 1-cocycle is equivalent to a
1-cocycle with values in $\operatorname{GL}_{g}(\overline{k})$, and
recall that
$\operatorname{H}^1(k,\operatorname{GL}_g(\overline{k}))$ is trivial
by applying Hilbert's Theorem 90. This allows us to compute
equations for twists via change of variables in
$\operatorname{GL}_g(\overline{k})$ of the canonical model for $C$.
Now, by Lemma \ref{leminc} and the proof of Theorem
\ref{prop-generator}, one could construct a smooth model for the
Brauer-Severi surface in $\mathbb{P}_k^9$ by taking the $V_{d-3}$
embedding with $d=6$ and $C$ as in the statement.
\end{proof}

%
%

\end{document}